\theoremstyle{plain}
\newtheorem{theorem}[subsubsection]{Theorem}
\newtheorem{prop}[subsubsection]{Proposition}
\theoremstyle{definition}
\newtheorem{remark}[subsubsection]{Remark}
\newtheorem{ex}[subsubsection]{Example}
\numberwithin{equation}{section}
\def\sfG{\mathsf{G}}
\def\sfG{\mathsf{G}}
\def\sfL{\mathsf{L}}
\def\AA{\mathbb{A}}
\def\CC{\mathbb{C}}
\def\GG{\mathbb{G}}
\def\BG{\mathbb{G}}
\def\RR{\mathbb{R}}
\def\BR{\mathbb{R}}
\newcommand\cF{\mathcal{F}}
\newcommand\cN{\mathcal{N}}
\newcommand\cO{\mathcal{O}}
\newcommand\frb{\mathfrak{b}}
\newcommand\frc{\mathfrak{c}}
\newcommand\frg{\mathfrak{g}}
\newcommand\frakm{\mathfrak{m}}
\newcommand\frn{\mathfrak{n}}
\newcommand\frt{\mathfrak{t}}
\newcommand{\Bun}{\textup{Bun}}
\newcommand{\Ind}{\textup{Ind}}
\newcommand{\ind}{\textup{ind}}
\newcommand\Loc{\textup{Loc}}
\newcommand{\reg}{\textup{reg}}
\newcommand\res{\textup{res}}
\newcommand\Spec{\textup{Spec }}
\newcommand\Sym{\textup{Sym}}
\newcommand{\univ}{\textup{univ}}
\newcommand{\Vect}{\textup{Vect}}
\newcommand\Aut{\textup{Aut}}
\newcommand\Hom{\textup{Hom}}
\newcommand\uHom{\underline{\Hom}}
\newcommand{\Ad}{\textup{Ad}}
\renewcommand\a\alpha
\renewcommand\b\beta
\newcommand\g\gamma
\renewcommand\d\delta
\newcommand\D\Delta
\renewcommand{\l}{\lambda}
\renewcommand{\L}{\Lambda}
\newcommand{\om}{\omega}
\newcommand\quash[1]{}
\newcommand\pt{\textup{pt}}
\newcommand{\Coh}{\textup{Coh}}
\newcommand{\Sh}{\mathit{Sh}}
\newcommand{\beq}{\begin{equation}}
\newcommand{\eeq}{\end{equation}}
\newcommand{\ssupp}{\mathit{ss}}
\newcommand\ver{\mathit{vert}}
\newcommand{\Real}{\operatorname{Re}}
\title{The Whittaker functional is a shifted microstalk}
\author{David Nadler}
\author{Jeremy Taylor}
\address{Department of Mathematics\\University of California, Berkeley\\Berkeley, CA  94720-3840}
\email{nadler@math.berkeley.edu}
\email{jeretaylor@berkeley.edu}
\subjclass[]{}
\dedicatory{}
\keywords{}
\begin{document}


\maketitle

\begin{abstract}
For a smooth projective curve $X$ and reductive group $G$, the Whittaker functional on nilpotent sheaves on $\Bun_G(X)$ is expected to correspond to global sections of  coherent sheaves on the spectral side of Betti geometric Langlands. We prove that the Whittaker functional calculates the shifted microstalk of nilpotent sheaves at the point in the Hitchin moduli where the Kostant section intersects the global nilpotent cone.
In particular, the shifted Whittaker functional is exact for the perverse $t$-structure and commutes with Verdier duality.
Our proof is topological and depends  on the intrinsic local hyperbolic symmetry of $\Bun_G(X)$. It is an application of a general result relating vanishing cycles  to the composition of  
restriction to an attracting locus followed by vanishing cycles.

\end{abstract}

\tableofcontents

\section{Introduction}

Let    $X$ be a smooth projective complex curve, $G$ a complex reductive group with Langlands dual $G^\vee$.
 
\subsection{Main result}
The Betti variant~\cite{BN}  of the geometric Langlands conjecture~\cite{Lgl, BD, AG} says there should be an equivalence
\beq
\Sh_{\Lambda}(\Bun_G(X)) \simeq \Ind\Coh_{\mathcal{N}}(\Loc_{G^{\vee}}(X))
\eeq
compatible with  natural structures (Hecke operators, parabolic induction, cutting and gluing curves...) on each side.

In this paper, we will only be concerned with the left hand automorphic side, $\Sh_{\Lambda}(\Bun_G(X))$, the (dg derived) category of sheaves with singular support~\cite{KS} in $\Lambda$. Here $\Lambda = h^{-1}(0) \subset T^*\Bun_G(X)$
is the global nilpotent cone, the closed conic Lagrangian ~\cite{L, G, BD} given by the zero-fiber of the Hitchin system $h:T^*\Bun_G(X)\to \frc^*_G(X)$.

The Kostant section $\kappa: \frc^*_G(X) \to T^*\Bun_G(X)$ to the Hitchin system has image $K = \kappa( \frc^*_G(X) )$ a closed (non-conic) Lagrangian that intersects  $\L$ transversely at a smooth point $\l \in \L$.  Informally speaking,  following paradigms from  $T$-duality applied to Hitchin systems, one expects the ``Lagrangian A-brane" $K$ to correspond to the space-filling ``coherent B-brane" $\cO_{\Loc_{G^{\vee}}(X)}$. If $K$ were to define a nilpotent sheaf $\cF_K\in \Sh_{\Lambda}(\Bun_G(X))$, one would expect the corepresented functor $\Hom(\cF_K, -)$ to give the  microstalk of nilpotent sheaves at the intersection point  $\l = K \cap \L$.
Thus under $T$-duality, one would expect  the microstalk at $\l$  to correspond to the global sections functor $\Gamma(\Loc_{G^{\vee}}(X), -) \simeq \Hom(\cO_{\Loc_{G^{\vee}}(X)}, -)$.

Explicitly describing the nilpotent sheaf $\cF_K$ corepresenting the microstalk of nilpotent sheaves at $\l$ is a difficult problem. But our main result, Theorem~\ref{whitmicrothm}, confirms  the traditional  Whittaker functional
\beq
\xymatrix{
\phi_{f, \rho^{\vee}(\omega)} i^!: \Sh_{\Lambda}(\Bun_G(X)) \ar[r] &  \Vect
}
\eeq
indeed calculates the  (shifted)  microstalk  at $\l = df(\rho^{\vee}(\omega))$ of nilpotent sheaves.
 Here we first pull back along \beq i: \Bun_N^{\omega}(X) \to \Bun_G(X)\eeq and then take vanishing cycles for a particular function \beq f: \Bun_N^{\omega}(X) \to \AA^1.\eeq
We will recall the notation and further details in \ref{whit}.\footnote{Let us at least remark here that the expression $df(\rho^{\vee}(\omega))$ makes sense even though $f$ is only a function on $\Bun_N^{\omega}(X)$ because of the natural splitting \beq T^*_{\rho^{\vee}(\omega)} \Bun_G(X) \simeq T^*_{\rho^{\vee}(\omega)} \Bun_{B^-}(X) \oplus T^*_{\rho^{\vee}(\omega)} \Bun_N^{\omega}(X).\eeq}

We also calculate the shift: the Whittaker functional is the usual exact microstalk (with respect to the perverse $t$-structure) after a shift by $\dim_{\rho^{\vee}(\omega)} \Bun_{B^-}(X)$.  
This exactness of the shifted Whittaker functional was recently obtained by
F\ae rgeman-Raskin~\cite{FR} and we were in part motivated by giving a geometric explanation of their results. Our proof of exactness only uses the hyperbolic symmetry of $\Bun_G(X)$, as opposed to tools from geometric Langlands. Our general result, Theorem \ref{main}, may be applicable outside of representation theory. As will be explained in Section \ref{Tame}, our arguments easily extend to the case of tame ramification.

\begin{remark}
The Whittaker functional is corepresented by the (non nilpotent) Whittaker sheaf $i_!f^* \Psi$ where $\Psi$ is a $\BG_m$-equivariant version of the Artin-Schreier sheaf (see for example~\cite{NY}). Let us pretend that the singular support of $f^*\Psi$ were the graph of the differential $\Gamma_{df} \subset T^*\Bun_N^{\omega}(X)$. This is of course nonsense because singular support is a closed \textit{conic} Lagrangian. But it is motivated by the observation that $f^* \Psi$ corepresents vanishing cycles for $f$. Accepting this, we would then expect the singular support of the Whittaker sheaf to be the shifted conormal bundle $T^*_{\Bun_N^{\omega}(X)} \Bun_G(X) + df \subset T^*\Bun_G(X)$ which coincides with the Kostant section $K \subset T^*\Bun_G(X)$.
\end{remark}

\subsection{Overview} Here is a brief overview of the sections of the paper.

 In  Sect.~\ref{commuting}, we establish the general result, Theorem \ref{main}, that in the presence of hyperbolic $\CC^\times$-symmetry on a complex manifold $Y$, the $!$-restriction to the attracting locus $i:Y^{>0} \to Y$ of a point $y_0\in Y$, followed by vanishing cycles $\phi_{f, y_0}$ for a function $f$ on $Y^{>0}$ is naturally isomorphic to vanishing cycles $\phi_{F, y_0}$ for a suitable extension $F$ of $f$ to $Y$. Adjunction provides a natural map
 \beq\label{eq:nat map}
\xymatrix{
  \phi_{f, y_0}  i^!\ar[r] &  \phi_{F, y_0}.
  }\eeq
  To show~\eqref{eq:nat map} is an isomorphism, we  corepresent the respective functionals by  $!$-extensions of constant sheaves on regions $V \subset W \subset Y$.  The cone of the map~\eqref{eq:nat map} is corepresented by the $!$-extension of the ``difference" $k_{W \setminus V}$. We show the cone vanishes on  sheaves with hyperbolic symmetry since $W \setminus V$ is foliated by its intersection with $\RR^{>0}$-orbits entering through its closed $*$-boundary and exiting through its open $!$-boundary.

Next we fix a singular support $\L \subset T^*Y$, and study the vanishing cycles $ \phi_{F, y_0}$ for the extended  function $F$.
In Theorem \ref{main} we defined $F$ to be maximally negative definite in the repelling directions. Now we also ask for the graph of its differential to intersect $\L$ cleanly along smooth points.
In Sect.~\ref{tip}, we show that $\phi_{F, y_0}$ is calcultes microstalk and is exact after a Maslov index shift which we calculate.

In  Sect.~\ref{whit}, we specialize to the situation of $\Bun_G(X)$ and define the Whittaker functional. We add level structure to uniformize (a quasicompact open substack of) $\Bun_G(X)$ by a scheme.
In  Sect.~\ref{hyp}, we recall the intrinsic hyperbolic action from \cite{DG} so as to apply Theorem \ref{main}.
In  Sect.~\ref{kost}, we interpret the shifted conormal bundle as the Kostant slice to see that it intersects the nilpotent cone cleanly. Then we calculate the Maslov index shift in terms of the dimension of the clean intersection.

\subsection{Acknowledgements}
We thank David Ben-Zvi, Joakim F\ae rgeman, Sam Raskin, and Zhiwei Yun for  helpful discussions.

This is an update to the published version with some errors corrected. We are grateful to Swapnil Garg for his careful reading and pointing out of  errors.

We were partially supported by NSF grants DMS-2101466 (DN) and DMS-1646385 (JT).

\section{General results}

\subsection{Some microlocal sheaf theory} Fix a field $k$. By a sheaf of $k$-modules, we will mean an object of the dg derived category of sheaves of $k$-modules.

Let $Y$ be a real analytic manifold and $F:Y\to \RR$ a real-valued smooth function.
We define the vanishing cycles
\beq  
\xymatrix{
\phi_F= (\Gamma_{F \geq 0}(-)) _{F=0} : \Sh(Y) \ar[r] &  \Sh(F = 0)
}
\eeq
 by first $!$-restricting to $F \geq 0$ and then $*$-restricting to $F = 0$. Further $*$-restricting to a point $y_0$ gives a functional
 \beq\label{eq:stalk of van}
 \xymatrix{
  \phi_{F, y_0}(-) = (\Gamma_{F \geq 0}(-))_{y_0}: \Sh(Y) \ar[r] &  \Vect.
  }\eeq
 We view the functional~\eqref{eq:stalk of van} as a measurement of a sheaf associated to the covector $dF(y_0) \in T^*Y$. The singular support $\ssupp(\cF)\subset T^*Y$ of a sheaf $\cF$ on $Y$ is the closure of those covectors $\xi_0 \in T^*Y$  for which there exists a function $F$ with $F(y_0) = 0$, $dF(y_0) = \xi_0$, and  $\phi_{F, y_0}(\cF) \not \simeq 0$.

Let $\Lambda \subset T^*Y$ be a subanalytic closed conic Lagrangian and $\Sh_{\Lambda}(Y)$ the category of sheaves with singular support in $\Lambda$. Recall there is a subanalytic stratification of $Y$ so that any sheaf with singular support in $\L$ is weakly constructible for the stratification. By adjunction, $!$-restriction to $F \geq 0$ is corepresented by the $!$-extension $k_{F \geq 0}$. The stalk at a point $y_0$ is corepresented by the $!$-extension $k_{B'}$ from a sufficiently small open ball $B' \subset Y$ around $y_0$, see Lemma 8.4.7 of \cite{KS}. Therefore vanishing cycles is corepresented by \beq  \xymatrix{\phi_{F, y_0} (-) \simeq \Hom(k_{B' \cap \{F \geq 0\}}, -): \Sh_{\Lambda}(Y) \ar[r] & \Vect }\eeq
where $k_{B' \cap \{F \geq 0\}}$ is $!$-extended along the boundary of the open ball $B'$ and $*$-extended along the closed boundary  $F = 0$.

Proposition 7.5.3 of \cite{KS} says that for a smooth point  $\xi_0 \in \L$, and any function $F$ such that the graph $\Gamma_{dF}$ of its differential intersects $\L$  transversely at $\xi_0$,  the shifted vanishing cycles $\phi_{F, y_0}[\ind/2]$ is independent of $F$ (up non-canonical isomorphism), only depending on $\xi_0$. Here $\ind/2$ denotes half the Maslov index of three Lagrangians in the symplectic vector space $T_{\xi_0}T^*Y$: the tangent to the graph $\Gamma_{dF}$, the tangent to the singular support $\L$, and the tangent to the  cotangent fiber $T_{y_0}^*Y$.
We call $\phi_{F, y_0}[\ind/2]$ the microstalk functional at $\xi_0$.

If $Y$ is  complex analytic, and $f:Y\to \CC$ is holomorphic, then there is a traditional vanishing cycles functor
\beq
 \xymatrix{
 \phi_f: \Sh(Y) \ar[r] &  \Sh(f = 0)}
 \eeq which we normalize so that it is exact with respect to the perverse $t$-structure (see \cite{KS}, Corollary 10.3.13).
Taking the stalk at a point $y_0$ gives a functional
\beq
\xymatrix{
\phi_{f, y_0}: \Sh(Y)  \ar[r] &  \Vect.}
\eeq
If $\L \subset T^*Y$ is complex subanalytic then on $\Sh_{\Lambda}(Y)$ the complex and real vanishing cycles functionals are related by $\phi_{f, y_0} \simeq \phi_{\Real f, y_0}$ (see \cite{KS} Exercise VIII.13). If also the intersection $\Gamma_{df} \cap \Lambda$ is zero-dimensional, then $\phi_f: \Sh_{\Lambda}(Y) \rightarrow \Sh(f = 0)$ takes values in sheaves with zero dimensional support, so after taking the stalk, $\phi_{f, y_0}$ is still exact.

Singular support behaves well under smooth pullback and pushforward along closed embeddings. For a map $\pi: Z\to Y$, consider  
 the natural Lagrangian correspondence
 \beq
 \xymatrix{
  T^*Z & \ar[l]_-{d\pi}  T^*Y \times_Y Z \ar[r]^-{\pi} &  T^*Y.}
  \eeq
If $\pi$ is smooth, then $\ssupp(\pi^!-)  = d\pi(\pi^{-1} (\ssupp(-)))$; if $\pi$ is a closed embedding, then $\ssupp (\pi_!-) =   \pi(d\pi^{-1}(\ssupp(-))) $
 (see \cite[Propositions 5.4.4 and 5.4.5]{KS}).

\subsection{Restriction to the attracting locus then vanishing cycles} \label{commuting}
Let $Y$ be a complex analytic manifold and $\Lambda \subset T^*Y$ be a subanalytic closed conic Lagrangian singular support condition.

It is not true in general pullback along a closed embedding followed by vanishing cycles can be interpreted as vanishing cycles.
\begin{ex}
Let $k_{y = x^2}$ be the pushforward to $\AA^2$ of the constant sheaf on a parabola and let $i: \AA^1 \to \AA^2$ be the inclusion of the axis $y = 0$. Then $i^! k_{y = x^2}$ is a skyscraper at $0 \in \AA^1$ and hence has nonzero vanishing cycles $\phi_{x, 0} i^! k_{y = x^2} \not \simeq 0$. On the other hand,   we have the vanishing $\phi_{x, 0} k_{y = x^2} \simeq 0$
since the level-sets of $x$ are transverse to $y=x^2$.
\end{ex}

Suppose we have a $\CC^{\times}$-action on $Y$. Let $Y^0\subset Y$ denote the fixed locus,  $Y^{\geq 0} \to Y$ the  attracting locus, and  $Y^{\leq 0} \to Y$ the repelling locus.
Suppose $y_0 \in Y^0$ is a fixed point. Let $Y^{> 0} \to Y$ be the attracting locus of $y_0$,   and $Y^{< 0} \to Y$ the repelling locus of $y_0$.

Let $f: Y^{>0} \to \CC$ be a $\CC^{\times}$-equivariant function where $\CC^{\times}$ acts linearly on the target $\CC$ with some weight.
The Whittaker functional
\beq
\xymatrix{
\phi_{f, y_0} i^! : \Sh_{\Lambda}(Y) \ar[r] &  \Vect
}
\eeq
is defined by pulling back along
\beq
\xymatrix{
i: Y^{> 0} \ar[r] &  Y
}
\eeq and then taking vanishing cycles for the function $f$ at the point $y_0$.

We wish to compare the Whittaker functional with directly taking vanishing cycles on $Y$ without pulling back to $Y^{>0}$ first. To define vanishing cycles on $Y$, we need a function on $Y$ extending $f$.

Choose complex coordinate functions $(y^{< 0}_i, y^0_j, y^{> 0}_k)$ so that $\CC^{\times}$ acts by $y^{<0}_i(z \cdot y) = z^{m_i} y^{< 0}_i(y)$ with negative weights $m_i < 0$, by $y^{> 0}_k(z \cdot y) = z^{n_k} y^{> 0}_k(y)$ with positive weights $n_k > 0$, and fixes the coordinates $y^0_j(z\cdot y) = y^0_j(y)$. Consider  distance functions (for some metric) in these coordinates
\beq
d_{<0}(y) = \sum |y_i^{< 0}|^2, \qquad d_{>0}(y) = \sum |y_i^{>0}|^2, \qquad d_0(y) = \sum |y^0_i|^2.
\eeq
Note that acting by $z \in \CC^{\times}$ with $|z| >1$ decreases $d_{<0}$ (when non-zero), increases $d_{>0}$ (when non-zero), and fixes $d_0$.

Now define  (the germ near $y_0$ of)  the real-valued smooth function $F:Y\to \RR$ by the formula \beq\label{FDef} F = \Real f - (d_0 + d_{<0}),\eeq or more precisely, $F(y) = \Real f(\pi(y)) - d_0(y) - d_{<0}(y)$, where $\pi: Y \to Y^{>0}$ is the $\CC^{\times}$-equivariant coordinate  projection.


Note that $dF(y_0)$ lies in the second summand
of   the $\CC^{\times}$-equivariant splitting $T^*_{y_0}Y \simeq T^*_{y_0}Y^{\leq 0} \oplus T^*_{y_0}Y^{> 0}$ and  $dF(y_0) =  d\Real f(y_0) \in T^*_{y_0}Y^{> 0}$.


Having fixed the function $F$, note the vanishing cycles functor
\beq
\xymatrix{
\phi_{F, y_0}(-) \simeq \Hom(k_V, -): \Sh_{\Lambda}(Y) \ar[r] &  \Vect
}
\eeq
 is corepresented by  the $!$-extension $k_V$ of the constant sheaf on
\beq
V := B \cap \{F \geq 0\},
\eeq
where $B\subset Y$ is a small open ball around $y_0$, see Figure \ref{Fig1}. More generally, we can
take $B$ to be a polyball cut out by
\beq
 d_{>0} < a,\qquad d_0 +  d_{< 0}  < b  
\eeq
for $a, b>0$.

\begin{figure}
\includegraphics[scale=0.7, trim=100 450 100 50, clip]{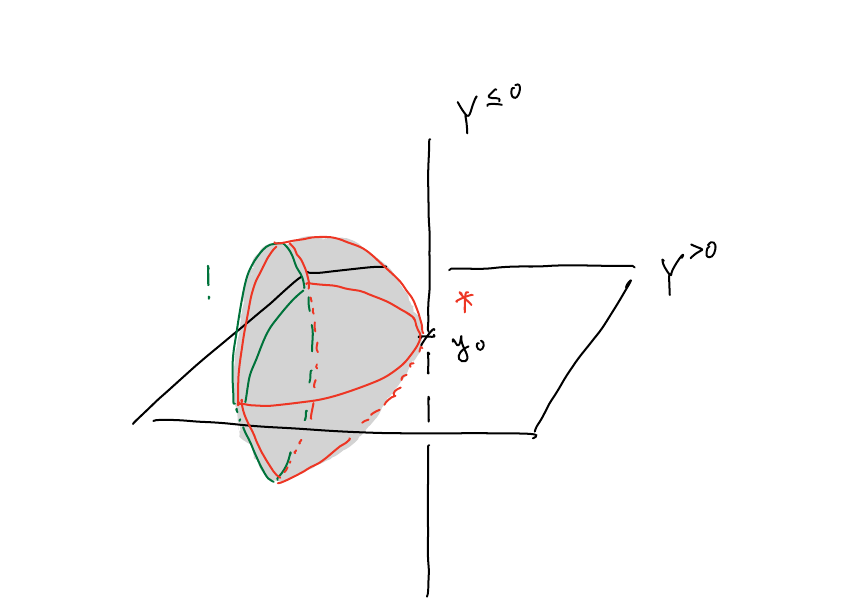}
\caption{Structure of $k_V$ where $V = B \cap \{F \geq 0\}$.}\label{Fig1}
\end{figure}

By adjunction, the $!$-restriction along  $i: Y^{> 0} \to Y$
 is internally corepresented
 \beq
 i^! (-) \simeq \uHom(k_{Y^{>0}}, -)
 \eeq
by  the $!$-extension $k_{Y^{>0}}$ of the constant sheaf on $Y^{>0}$.
So the Whittaker functional is corepresented
\beq \label{WhittakerCorepresented}
\phi_{f, y_0} i^!( -) = \Hom(k_{Y^{>0} \cap B \cap \{\Real f \geq 0\}}, -)
\eeq
by the $!$-extension $k_{Y^{>0} \cap B \cap \{\Real f \geq 0\}}$  
of the constant sheaf on
\beq
U :=  Y^{>0} \cap B \cap \{\Real f \geq 0\},
\eeq
see Figure \ref{Fig2}.

\begin{figure}
\includegraphics[scale=0.7, trim=100 450 100 50, clip]{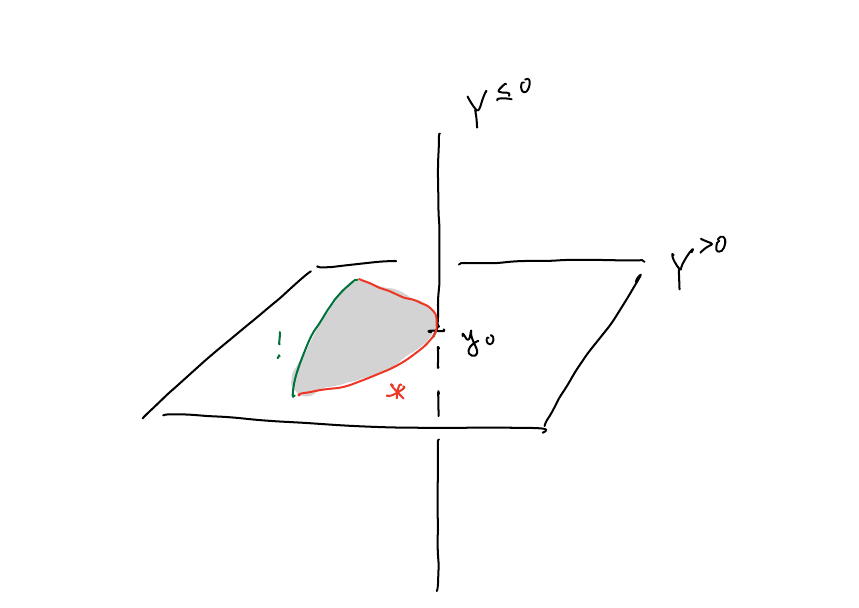}
\caption{Structure of $k_{Y^{>0} \cap B \cap \{\Real f \geq 0\}}$.}\label{Fig2}
\end{figure}

Now the inclusion   $U \subset V$ induces a restriction map $k_V\to k_U$
and hence  a map on corepresented functors  
\beq\label{eq:can map via coreps}
\xymatrix{
\phi_{f, y_0} i^!  \ar[r] &  \phi_{F, y_0}.
}
\eeq

We say  a sheaf $\cF$ on $Y$ is $\CC^{\times}$-monodromic if it is locally constant along each $\CC^{\times}$-orbit.\footnote{In fact, we will only use that a $\CC^{\times}$-monodromic sheaf $\cF$ is locally constant along each $\RR^{>0}$-orbit.}

\begin{theorem}\label{main}
If $\cF \in \Sh_{\Lambda}(Y)$ is $\CC^{\times}$-monodromic (and has singular support in some subanalytic closed conic Lagrangian $\Lambda$) then the map ~\eqref{eq:can map via coreps} induces an equivalence
\beq
\xymatrix{
\phi_{f, y_0} i^! \cF \ar[r]^-\sim &  \phi_{F, y_0} \cF.
}
\eeq
\end{theorem}

\begin{proof}
The kernel of the map~\eqref{eq:can map via coreps} is corepresented by  the $!$-extension
$k_{V \setminus U}$ of the constant sheaf on $V \setminus U$.

Let us assume $B$ is the polyball cut out by
\beq
 d_{>0} < a,\qquad d_0 +  d_{< 0}  < b  
\eeq
where $a>0$ is small compared with $b>0$.
Then we will show that $V \setminus U$ is foliated by flow lines for the action of $\BR^{>0} \subset \CC^{\times}$ entering through its $*$-boundary and exiting through its $!$-boundary.

Note the zero locus of the Euler vector field $v$ generating the $\BR_{>0}$-action is the fixed locus $Y^0$.
Since $F|_{Y^0 \setminus y_0} < 0$, on the closure of $U\setminus V$,
the vector field  $v$ only  vanishes at $y_0$.


Observe that the boundary components of  $V$ are given by the equations
\beq
F = 0, \qquad d_{>0} = a.
\eeq
Indeed,  if  $d_0 +  d_{< 0}  = b$, then by construction  $F(y) = \Real f(\pi(y)) - b$, so that for $F(y) \geq 0$, we must have $f(\pi(y)) \geq b$. But since we have chosen $a>0$  small with respect to $b>0$, we must have that eventually $f(\pi(y)) <b$ for any $y$ with $d_{>0}(y) < a$.

Observe that $k_{V \setminus U}$ is a $*$-extension along  the  $F = 0$ boundary component
and a $!$-extension along the $d_{>0} = a$ boundary component.
Now we further observe how the Euler vector field  $v$ interacts with the boundary components:
\begin{itemize}
 
\item
$v$  is outward pointing on the face  $F = 0$ since in the expression $F(y) = \Real f(\pi(y)) - d_0(y) - d_{<0}(y)$, the terms $f(\pi(y))$ and $-d_{<0}(y)$ both decrease and the term $d_0(y)$ is constant.

\item  $v$ is evidently inward pointing on the face $d_{>0} = a$ since the term $d_{>0}(y)$ decreases.
\end{itemize}

We conclude  $V \setminus U$ is foliated by flow segments for the action of $\BR^{>0} \subset \CC^{\times}$ entering through its $*$-boundary and exiting through its $!$-boundary.  The kernel of $\phi_{f, y_0} i^! \cF \to \phi_{F, y_0} \cF$ is global sections of the sheaf $\uHom(k_{V \setminus U}, \cF)$. If $\cF$ is $\CC^\times$-monodromic, then the pushforward of $\uHom(k_{V \setminus U}, \cF)$ to the quotient $Y/\BR_{>0}$ is already zero, so its global sections are zero.
\end{proof}

\subsection{Calculation of the shift}\label{tip}
As in Section \ref{commuting}, let $Y^{>0}$ be the attracting locus to $y_0$ and let $Y^{\leq 0}$ be the repelling locus.
In applying Theorem~\ref{main}, we want to be able to interpret the vanishing cycles $\phi_{F, y_0}$ as a microstalk.

For this we will use the following general result.

\begin{prop}\label{CleanMicrostalk}
Let $G$ be a function on a complex analytic manifold $X$. Let $\Lambda \subset T^*X$ be a complex subanalytic closed conic Lagrangian. Suppose that the graph of the differential $\Gamma_{dG}$ intersects $\Lambda$ cleanly along smooth points. Then \beq\label{GeneralMicrostalk} \xymatrix{ \phi_{G, x_0}[\ind(\lambda_{\ver}, \lambda, \lambda_G)/2 - \dim(\Lambda \cap \Gamma_{dG})] : \Sh_{\Lambda}(X) \ar[r] & \Vect} \eeq
is exact, calculates microstalk at $dG(x_0)$, and commutes with Verdier duality.\footnote{By dimension we always mean complex dimension.}

Here $\ind(\lambda_{\ver}, \lambda, \lambda_G)$ is the Maslov index of three Lagrangians \beq \label{3Lags}\lambda_{\ver} := T_{d G(x_0)} T^*_{x_0} X, \quad \lambda := T_{d G(x_0)}\Lambda, \quad \lambda_G := T_{d G(x_0)}\Gamma_{dG}\eeq inside the symplectic vector space $T_{dG(x_0)}T^*X$.
\end{prop}
\begin{proof}
The microstalk category $\Sh_{\Lambda}(X)_{dG(x_0)}$ is defined in Section 6.1 of \cite{KS} by localizing with respect to all sheaves singular supported away from $\xi_0$.
It is invariant under contact transformations by Corollary 7.2.2 of \cite{KS}. Since $dG(x_0)$ is a smooth point of $\Lambda$, the proof of Corollary 1.6.4 of \cite{KK} gives a contact transformation taking a neighborhood of $dG(x_0)$ in $\Lambda$ to a neighborhood in the conormal bundle to a smooth hypersurface. Therefore $\Sh_{\Lambda}(X)_{dG(x_0)} \simeq \Vect$ by Proposition 6.6.1 of \cite{KS}.

The functor \eqref{GeneralMicrostalk} factors through $\Sh_{\Lambda}(X)_{dG(x_0)}$. Therefore it just remains to prove that \eqref{GeneralMicrostalk} is exact and factors as the quotient functor to $\Sh_{\Lambda}(X)_{dG(x_0)}$ followed by an equivalence $\Sh_{\Lambda}(X)_{dG(x_0)} \xrightarrow{\sim} \Vect$. By Lemma 7.5.2 and Proposition 7.5.3 of \cite{KS} (the number $d$ in the statement of their lemma does not depend on the function $G$), it suffices to assume that $\Lambda = T^*_ZX$ is the conormal to a complex submanifold. Then the constant sheaf $k_Z[\dim Z]$ supported on $Z$ generates $\Sh_{\Lambda}(X)_{dG(x_0)}$. Let $V \subset Z$ be the image of $\Gamma_{dG} \cap \Lambda$. Then $G|_Z$ is constant along $V$ and approximated by a nondegenerate quadratic form of signature $\ind(\lambda_{\ver}, \lambda, \lambda_G)$ in the directions normal to $V$.  Therefore $\phi_G(k_Z[\dim Z + \ind(\lambda_{\ver}, \lambda, \lambda_G)/2]) \simeq k_V[\dim V]$. Hence \eqref{GeneralMicrostalk} is exact and sends $k_Z[\dim Z]$ to a one-dimensional vector space as desired.
\end{proof}

Below we prove the standard fact that in the complex analytic setting, microstalk at a smooth point of the singular support commutes with Verdier duality.

\begin{prop}\label{MicroVerdier}
If $\Lambda \subset T^*X$ is a complex subanalytic closed conic Lagrangian then microstalk at a smooth point $\xi_0 \in \Lambda$ commutes 
with Verdier duality.
\end{prop}
\begin{proof}
Since $\Lambda$ is complex subanalytic, Verdier duality on $\Sh_{\Lambda}(X)_{\xi_0}$ preserves singular support, see Exercise V.13 of \cite{KS}. Both the microstalk at $\xi_0$ and its conjugate by Verdier duality vanish on sheaves singular supported away from $\xi_0$, so they both factor through the microstalk category $\Sh_{\Lambda}(X)_{\xi_0} \simeq \Vect$ and therefore coincide.
\end{proof}

\begin{remark}[Real versus holomorphic tangent bundles]\label{realcomplex}
For a holomorphic function $f$ on $Y^{>0}$ the differential $df(y) \in (T_y^*Y^{>0} \otimes_{\RR} \CC)^{1, 0}$ lies in the holomorphic tangent bundle. Alternatively, we could take the differential of the real part $(d \Real f) (y) \in T_y^*Y^{>0}$. We identify the holomorphic tangent bundle with the tangent bundle of the underlying real manifold by \beq \xymatrix{\Real:(T_y^*Y^{>0} \otimes_{\RR} \CC)^{1, 0} \ar[r] & T_y^*Y^{>0} \otimes_{\RR} \CC  \ar[r] & T_y^*Y^{>0}}\eeq where the second map is dual to the inclusion $T_yY^{>0} \hookrightarrow T_yY^{>0}\otimes_{\RR} \CC.$ Therefore we let $T^*Y^{>0}$ also denote the holomorphic tangent bundle. For a holomorphic function $f$, we are free to confuse $df(y)$ with $d\Real f(y)$.
Moreover, the two shifted conormal bundles $T^*_{Y^{>0}} Y + df$ and $T^*_{Y^{>0}} Y + d\Real f$ are identified inside $T^*Y$.
\end{remark}

\begin{prop}\label{shift}
Let $\Lambda \subset T^*Y$ be a closed conic complex subanalytic Lagrangian preserved by the hyperbolic action. Let $f: Y^{>0} \rightarrow \CC$ is holomorphic function that is equivariant for $\CC^{\times}$ acting on $\CC$ with weight equal to the smallest weight of the $\CC^{\times}$-action on $Y^{>0}$. Assume also that $T^*_{Y^{>0}}Y + df$ intersects $\Lambda$ cleanly along smooth points. Then there exists an extension $F: Y \rightarrow \BR$ as in \eqref{FDef}, such that the vanishing cycles functor
\beq \label{MicrostalkF}
\xymatrix{
 \phi_{F, y_0}[\dim Y^{\leq 0} - \dim (\Lambda \cap (T_{Y^{>0}}^* Y + df))]: \Sh_{\Lambda}(Y) \ar[r] &  \Vect
 }
 \eeq is exact, calculates microstalk at $df(y_0)$, and commutes with the Verdier duality.
\end{prop}
\begin{proof}
Proposition \ref{MaslovIndex} says that for some choice of coordinates, the construction from Section \ref{commuting} gives an extension $F$ such that $\Gamma_{dF}$ intersects $\Lambda$  cleanly along smooth points. Therefore Proposition \ref{CleanMicrostalk} implies that, up to a shift, $\phi_{F, y_0}$ is exact, calculates microstalk at $df(y_0)$, and commutes with Verdier duality.
Moreover the shift is given by combining \eqref{GeneralMicrostalk} and \eqref{index}.
\end{proof}

If $\Lambda$ is a smooth closed conic Lagrangian then it is the conormal to a smooth submanifold $\Lambda = T^*_ZY$ (\cite{KS} Exercise A.2) and the Maslov index of \eqref{3Lags} is the signature of the Hessian of $F$ restricted to $Z$. This is not the case in our application. Although the global nilpotent cone is smooth near $df(y_0)$, it is not smooth near $y_0$ in the zero section. But the Maslov index only depends on the tangent space to $\Lambda$ at $df(y_0)$. So to compute the index we will choose a submanifold $Z$ whose conormal bundle is tangent to $\Lambda$ at $df(y_0)$.

\begin{prop}\label{ConormalTangentLambda}
Let $\Lambda \subset T^*Y$ be a closed conic Lagrangian that is smooth near $df(y_0)$ and preserved by the hyperbolic $\BG_m$-action. Set $ \lambda = T_{d f(y_0)}\Lambda$. Then there exists a hyperbolic $\BG_m$-stable submanifold $Z \subset Y$ such that the tangent spaces coincide:\beq \label{TangentCoincide} T_{df(y_0)}T_Z^*Y = \lambda.\eeq
\end{prop}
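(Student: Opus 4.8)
The plan is to construct $Z$ explicitly near $y_0$ by ``linearizing'' the Lagrangian $\Lambda$ at the point $df(y_0)$ in a way that respects the $\BG_m$-grading, and then invoke the fact that every conic Lagrangian subspace of a cotangent symplectic vector space that is transverse (or in good position) to the fiber direction is itself a cotangent space of a linear subspace. Concretely, I would first use the hyperbolic $\BG_m$-action to choose linear coordinates on $Y$ near $y_0$ as in the proof of Theorem~\ref{main}, splitting $T_{y_0}Y = T_{y_0}Y^{<0} \oplus T_{y_0}Y^0 \oplus T_{y_0}Y^{>0}$ into weight pieces, with the induced dual splitting on $T^*_{y_0}Y$. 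The covector $df(y_0)$ lies in the $T^*_{y_0}Y^{>0}$ summand (by the splitting recalled in \ref{commuting}); since $\Lambda$ is conic and $\BG_m$-stable, $\lambda = T_{df(y_0)}\Lambda$ is a $\BG_m$-stable Lagrangian subspace of the symplectic vector space $T_{df(y_0)}T^*Y \cong T_{y_0}Y \oplus T^*_{y_0}Y$.

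The next step is the linear-algebra heart of the argument: a $\BG_m$-stable Lagrangian subspace $\lambda \subset T_{y_0}Y \oplus T^*_{y_0}Y$ that arises as the tangent space to a \emph{conic} Lagrangian through $df(y_0)$ must contain the ``vertical'' Euler direction and be ``graphical enough'' that $\lambda = T_{v_0} T^*_{W_0}(T_{y_0}Y)$ for a unique linear subspace $W_0 \subset T_{y_0}Y$, where $v_0$ is the image of $df(y_0)$. I would establish this by noting that $\lambda \cap T^*_{y_0}Y$ (the intersection with the fiber) is the conormal-fiber part, and its annihilator inside $T_{y_0}Y$ is the desired $W_0$; the conic and Lagrangian conditions force $\lambda$ to be exactly the conormal space of $W_0$ shifted to pass through $v_0$, which in linear coordinates is still a linear conormal space $T^*_{W_0}(T_{y_0}Y)$ since $v_0$ annihilates $W_0$. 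Here one uses Exercise A.2 of \cite{KS} in its linear/formal incarnation. Because the whole construction is done $\BG_m$-equivariantly, $W_0$ is automatically a $\BG_m$-stable subspace of $T_{y_0}Y$.

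Finally, I would take $Z \subset Y$ to be the linear subspace through $y_0$ with tangent $W_0$, expressed in the chosen $\BG_m$-equivariant coordinates — equivalently, the image under the coordinate chart of $W_0 \subset T_{y_0}Y$. Being a coordinate subspace adapted to the grading, $Z$ is $\BG_m$-stable, and by construction $T_{df(y_0)} T^*_Z Y = T_{v_0} T^*_{W_0}(T_{y_0}Y) = \lambda$, which is \eqref{TangentCoincide}. (One should remark that only the tangent space of $Z$ at $y_0$ matters for the Maslov index, so there is freedom in the higher-order behavior of $Z$; choosing $Z$ linear in adapted coordinates is merely the cleanest representative.)

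\textbf{Main obstacle.} The step I expect to require the most care is verifying that $W_0$ is genuinely $\BG_m$-stable and, relatedly, that $df(y_0)$ does annihilate $W_0$ — i.e.\ that $\lambda$, being both conic and Lagrangian, really is forced to be the conormal of its own ``shadow'' $W_0$ rather than some more general affine Lagrangian. The conic hypothesis is what rules this out (the cone condition at a smooth point of $\Lambda$ pins down the radial direction inside $\lambda$), but making this precise at the level of tangent spaces, and checking compatibility with the three weight pieces of the $\BG_m$-action, is the delicate point; everything else is a translation of \cite{KS}, Exercise A.2 into equivariant linear algebra.
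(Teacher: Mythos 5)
There is a genuine gap, and it is exactly at the step you flag as delicate: the claim that at a point $df(y_0)$ off the zero section, the conic and Lagrangian conditions force $\lambda$ to be the tangent space of the conormal of the \emph{linear} subspace $W_0 = \rho^{\perp}$ (the annihilator of $\rho = \lambda \cap T^*_{y_0}Y$). This is false, even equivariantly. The tangent space at a nonzero covector to the conormal of a submanifold $Z$ depends not only on $T_{y_0}Z$ but also on the second fundamental form of $Z$ paired with $df(y_0)$; concretely, take $Y = \CC^2$ with $\BG_m$ acting with weights $(1,2)$ on $(x_1,x_2)$, let $Z_{par} = \{x_2 = x_1^2\}$ (which is $\BG_m$-stable) and $\Lambda = T^*_{Z_{par}}Y$, a smooth conic $\BG_m$-stable Lagrangian. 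At the covector $s_0\,dx_2$ at the origin ($s_0 \neq 0$) one computes $\lambda = \mathrm{span}\{\partial_{x_1} - 2s_0\,\partial_{\xi_1},\ \partial_{\xi_2}\}$, whereas your candidate, the conormal tangent of the linear subspace $W_0 = \{x_2=0\}$, is $\mathrm{span}\{\partial_{x_1}, \partial_{\xi_2}\}$. These differ by a nontrivial symmetric coupling between base and fiber directions, i.e.\ by a nonzero quadratic form on $\rho^{\perp}$; conicity only forces the radial line through $df(y_0)$ into $\lambda$ and the base projection of $\lambda$ into $\ker df(y_0)$, and $\BG_m$-stability only forces this quadratic form to be equivariant (of the weight of $f$), not zero. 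In the application this form is genuinely nonzero, so your construction produces the wrong $\lambda$.

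This is precisely the point the paper's proof is organized around: the set $\sfL$ of Lagrangians with vertical part $\rho$ is a torsor over quadratic forms on $\rho^{\perp}$, and the set $\sfG$ of quadratic-order germs of submanifolds with tangent $\rho^{\perp}$ is a torsor over quadratic forms valued in the normal space; the map $\sfG \to \sfL$ is composition with $df$, and one must (equivariantly) lift the quadratic form attached to $\lambda$ to choose the second-order behavior of $Z$, and then lift the resulting $\BG_m$-stable quadratic germ to an honest $\BG_m$-stable submanifold by lifting eigenvector defining equations modulo $\frakm^3$. Your closing remark that only $T_{y_0}Z$ matters and the higher-order behavior of $Z$ is free reflects the same misconception: the higher-order (quadratic) behavior of $Z$ is exactly what must be tuned for $T_{df(y_0)}T^*_ZY = \lambda$ to hold. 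Your identification of the correct tangent space $T_{y_0}Z = \rho^{\perp}$, and the use of equivariant coordinates and the zero-section case via Exercise A.2 of \cite{KS}, do agree with the paper; what is missing is the entire second-order matching step.
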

\begin{proof}
If $df(y_0) = 0$ is in the zero section, then by smoothness $\Lambda = T_Z^*Y$ is conormal to a smooth $\BG_m$-equivariant submanifold $Z$, above a neighborhood of $y_0 \in Y$.

If $df(y_0) \neq 0$ is not in the zero section, then the tangent space to the conormal $T_{df(y_0)}T_Z^*Y$ is determined by the tangent space $T_{y_0}Z$ plus the quadratic order behavior of $Z$ in the $df(y_0)$ codirection. Set $\rho = \lambda \cap \lambda_{\ver}$ regarded as a subspace of the vertical $\lambda_{\ver} = T^*_{y_0}Y$. (Note $\rho$ contains at least the tangent to the line through $df(y_0)$.) There are two different things that we could mean by the orthogonal of $\rho$: its symplectic orthogonal $\rho_{\perp} \subset T_{df(y_0)}T^*Y$, or  its orthogonal under duality $\rho^{\perp} \subset T_{y_0}Y$.

Let $\sfL$ be the set of:
\begin{itemize} \label{CotangentSubmanifold}
\item Lagrangians $\lambda' \subset T_{df(y_0)}T^*Y$ with vertical component $\lambda' \cap \lambda_{\ver} = \rho$,
\item or equivalently Lagrangians $\lambda'/\rho \subset \rho_{\perp}/\rho$ transverse to the vertical fiber.
\end{itemize}
Then $\sfL$ is a torsor for the vector space of quadratic forms on $\rho^{\perp}$. Indeed if we choose a reference Lagrangian $\lambda_0 \in \sfL$ then we can identify $\rho_{\perp}/\rho = T^* \rho^{\perp}$ using $\lambda_0/\rho$ as the zero section. A Lagrangian in $T^* \rho^{\perp}$ transverse to the vertical fiber is the graph of a quadratic form on $\rho^{\perp}$.\footnote{The graph of a bilinear form $V \rightarrow V^*$ is Lagrangian if and only if the bilinear form is symmetric.}

Let $\sfG$ be the set of germs of submanifolds $Z' \subset Y$ with tangent space $T_{y_0}Z' = \rho^{\perp}$ up to quadratic order equivalence. This is a torsor for the space of quadratic forms from $\rho^{\perp}$ to the normal space. Indeed if we choose a reference $[Z_0] \in \sfG$ then quadratic germs in $\sfG$ can be identified with the graphs of quadratic forms $T_{y_0}Z_0 \rightarrow (T_{Z_0}Y)_{y_0}$.

Taking the tangent space to the conormal bundle gives a map  \beq \label{TangentToConormal} \xymatrix{\sfG\ar[r] & \sfL, & [Z'] \ar@{|->}[r] & T_{df(y_0)}T^*_{Z'}Y.}\eeq

There are two commuting $\BG_m$ actions on $T^*Y$: the cotangent fiber scaling action and the Hamiltonian action induced by the hyperbolic action on $Y$. Neither action fixes the point $df(y_0)$. However there is some combination of the two actions which fixes $df(y_0)$ and therefore acts linearly on $T_{df(y_0)}T^*Y$. This is because $y_0$ is a fixed point and $f:Y^{>0} \rightarrow \CC$ is $\BG_m$-equivariant where $\BG_m$ acts on $\CC$ with some weight $n$. Therefore we get a $\BG_m$-action on $\mathsf{L}$ for which $\mathsf{G} \rightarrow \mathsf{L}$ is equivariant.\footnote{Alternatively $[df(y_0)] \in P^*Y$ is a fixed point in the projectivized cotangent bundle so $\BG_m$-acts on the set of Legendrians inside $T_{[df(y_0)]}S^*Y$.}

First, choose any $\BG_m$-stable germ $[Z_{0}] \in \sfG$.\footnote{Lifting the $\BG_m$-stable tangent space $T_{y_0}Z_0 = \rho^{\perp}$ to such a quadratic germ is the same argument as the final paragraph of the proof.} This gives identifications $\sfL \simeq \Hom(\Sym^2(T_{y_0}Z_0), \CC)$ and $\sfG \simeq \Hom(\Sym^2(T_{y_0}Z_0), (T_{Y_0}Y)_{y_0})$ compatibly with all $\BG_m$-actions (in particular the weight $n$ action on $\CC$). Then \eqref{TangentToConormal} is identified with \beq\label{TangentToConormaltriv} \xymatrix{\Hom(\Sym^2(T_{y_0}Z_0), (T_{Z_0}Y)_{y_0}) \ar[r] & \Hom(\Sym^2(T_{y_0}Z_0), \CC)}\eeq given by composition with $df:(T_{Z_0}Y)_{y_0} \rightarrow \CC$.

Since $\Lambda \subset T^*Y$ is preserved by both $\GG_m$-actions, its tangent space $\lambda$ is preserved by the combined action on $T_{df(y_0)}T^*Y$. Therefore the associated quadratic form $\Sym^2(T_{y_0}Z_0) \rightarrow \CC$ is $\BG_m$-equivariant. Lift it along \eqref{TangentToConormaltriv} to a $\BG_m$-equivariant quadratic form $\Sym^2(T_{y_0}Z_0) \rightarrow (T_{Z_0}Y)_{y_0}$. This gives a $\BG_m$-equivariant germ $[Z] \in \sfG$ whose conormal is $\lambda$.

It just remains to lift the $\BG_m$-stable germ $[Z] \in \sfG$ to a genuine $\BG_m$-stable submanifold. Suppose $[Z]$ is cut out from $\Spec(\cO_{Y, y_0}/\frakm^3)$ by polynomials $\overline{f}_1, \dots \overline{f}_d \in \cO_{Y, y_0}/\frakm^3$
that are $\BG_m$-eigenvectors and  whose differentials at $y_0$ are linearly independent. Lift them to  polynomials $f_1, \dots f_d$ in $\cO_{Y, y_0}$ that are $\BG_m$-eigenvectors. Then the lifts $f_1, \dots f_d$ cut out a $\BG_m$-stable submanifold $Z$ in a neighborhood of $y_0$ satisfying the desired $T_{df(y_0)}T_Z^*Y = \lambda$.
\end{proof}

Now we are ready to replace $\Lambda$ by a conormal bundle and calculate the shift.
Let $\ind(\lambda_{\ver}, \lambda, \lambda_F)/2$ denote the Maslov index of the three Lagrangians \beq \lambda_{\ver} := T_{d f(y_0)} T^*_{y_0} Y, \quad \lambda := T_{d f(y_0)}\Lambda, \quad \lambda_F := T_{d f(y_0)}\Gamma_{dF}\eeq inside the symplectic vector space $T_{df(y_0)}T^*Y$.
Moreover let $\lambda_f := T_{d f(y_0)}(T^*_{Y^{> 0}}Y + df)$ be tangent to the shifted conormal.

\begin{prop} \label{MaslovIndex}
Under the assumptions of Proposition \ref{shift}, the function $F$ can be chosen such that $\Gamma_{dF}$ intersects $\Lambda$ cleanly along smooth points and \beq \label{index} \ind(\lambda_{\ver}, \lambda, \lambda_F)/2 - \dim(\lambda \cap \lambda_F) = \dim Y^{\leq 0} - \dim(\lambda \cap \lambda_f).\eeq
\end{prop}

\begin{proof}
Use Proposition \ref{ConormalTangentLambda} to choose a $\CC^{\times}$-stable submanifold $Z$ containing $y_0$ such that $T_{df(y_0)}T^*_ZY = \lambda$. Let $Z^{> 0}$ be the attracting locus to $y_0$ and $Z^{\geq 0}$ be the repelling locus.
There is a short exact sequence \beq\label{ProjectBase} 0 \rightarrow \lambda_{\ver} \rightarrow T_{df(y_0)} T^*Y \xrightarrow{\pi} T_{y_0} Y \rightarrow 0.\eeq

By \eqref{ProjectBase} we have \beq \label{VertHorzf} \dim(\lambda \cap \lambda_f) = \dim(\lambda \cap \lambda_f \cap \lambda_{\ver}) + \dim (\pi(\lambda \cap \lambda_f)).\eeq
Since $\pi(\lambda_f) \subset T_{y_0}Y^{>0}$, the restriction of $\pi$ to $\lambda \cap \lambda_f$ factors through \beq \lambda \cap \lambda_f \rightarrow T_{df(y_0)} T^*Y|_{Y^{> 0}} \xrightarrow{\rho} T_{df(y_0)} T^*Y^{> 0} \rightarrow T_{y_0} Y^{> 0}.\eeq Note that
\beq\label{Rhof} \rho(\lambda \cap \lambda_f) = T_{df(y_0)} T^*_{Z^{>0}} Y^{>0} \cap T_{df(y_0)} \Gamma_{df}.\eeq
Moreover
\beq\label{DimfVert} \dim(\lambda \cap \lambda_f \cap \lambda_{\ver}) = \dim Y^{\leq 0} - \dim Z^{\leq 0}.\eeq
Combining the above gives
\beq \label{Dimf} \dim(\lambda \cap \lambda_f) = \dim Y^{\leq 0} - \dim Z^{\leq 0} + \dim(T_{df(y_0)} T^*_{Z^{>0}} Y^{>0} \cap T_{df(y_0)} \Gamma_{df}).\eeq

A similar argument shows that \beq \dim(\Lambda \cap (T^*_{Y^{>0}} Y + df)) = \dim Y^{\leq 0} - \dim Z^{\leq 0} + \dim(T^*_{Z^{>0}} Y^{>0} \cap \Gamma_{df}).\eeq
Since the intersection $\Lambda \cap (T^*_{Y^{>0}} Y + df)$ was assumed to be clean, we find that the intersection $T^*_{Z^{>0}} Y^{>0} \cap \Gamma_{df}$ is also clean.

Note that $\Gamma_{dF}|_{Y^{>0}}$ is a section of the shifted conormal $T^*_{Y^{>0}}Y + df \rightarrow Y^{> 0}$, that is determined by the choice of coordinates in Section \ref{commuting}. Now let $\CC^{\times}$ act on $T^*Y$ fixing $df(y_0)$ by a combination of the hyperbolic action and scaling of the cotangent fibers. Choosing coordinates appropriately we can make $\Gamma_{dF}|_{Y^{>0}}$ any $\CC^{\times}$-stable section of $T^*_{Y^{>0}}Y + df$. Note that $\Lambda \cap (T^*_{Y^{>0}}Y + df)$ is $\CC^{\times}$-stable and it is a submanifold by the cleanness assumption. Since its image under $T^*Y|_{Y^{>0}} \rightarrow T^*Y^{>0}$ is $T^*_{Z^{>0}} Y^{>0} \cap \Gamma_{df}$, we can choose coordinates so that \beq\label{LambdaFIntersect} \dim(\Lambda \cap \Gamma_{dF}) = \dim(T^*_{Z^{>0}} Y^{>0} \cap \Gamma_{df}).\eeq

Since $F$ is a nondegenerate quadratic form in the repelling directions we have $\pi(\lambda \cap \lambda_F) \subset T_{y_0} Y^{> 0}$.
Therefore \beq\label{InclusionIntersect} \rho(\lambda \cap \lambda_F) \subset \rho(\lambda) \cap \rho(\lambda_F) = T_{df(y_0)} T^*_{Z^{>0}} Y^{>0} \cap T_{df(y_0)} \Gamma_{df}\eeq
implies that \beq \dim(\lambda \cap \lambda_F) = \dim(\rho(\lambda \cap \lambda_F)) \leq \dim(T_{df(y_0)} T^*_{Z^{>0}} Y^{>0} \cap T_{df(y_0)} \Gamma_{df}) = \dim(\Lambda \cap \Gamma_{dF})\eeq
Therefore the inequality is actually an equality \beq\label{DimF} \dim(\lambda \cap \lambda_F) = \dim(T_{df(y_0)} T^*_{Z^{>0}} Y^{>0} \cap T_{df(y_0)} \Gamma_{df})\eeq
and the intersection $\Lambda \cap \Gamma_{dF}$ is clean.

Recall that $f$ is $\CC^{\times}$-equivariant, where $\CC^{\times}$ acts linearly on the target $\CC$ with weight equal to the smallest weight of the $\CC^{\times}$ action on $Y^{>0}$. Therefore 
\beq \dim(T^*_{Z^{>0}} Y^{>0} \cap \Gamma_{df}) = \dim(Z^{>0}).\eeq
Hence \eqref{LambdaFIntersect} and cleaness of the intersection $\Lambda \cap \Gamma_{dF}$ imply that $0$ has multiplicity $2\dim(Z^{>0})$ as an eigenvalue of the Hessian of $F|_Z$. Since $F|_{Z^{\leq 0}}$ is negative definite, the Maslov index is \beq\label{HessianIndex} \ind(\lambda_{\ver}, \lambda, \lambda_F) = 2\dim Z^{\leq 0},\eeq
the negative signature of the Hessian of $F|_Z$.
Combining \eqref{Dimf}, \eqref{DimF},  and \eqref{HessianIndex} implies the desired \eqref{index}.
\end{proof}

\begin{ex}
Let $Y = \AA^2$ with hyperbolic action $z \cdot (x, y) = (zx, z^{-1} y)$. So $i$ is the inclusion of $Y^{>0} = \{y = 0\}$.

The Whittaker functional of the skyscraper sheaf $k_0$ is \beq \phi_{x, 0} i^! k_{0} = \phi_{x, 0} k_0 = k.\eeq The singular support $\Lambda = T_0^* \AA^2$ of the skyscraper intersects the shifted conormal bundle $
(T_{Y^{>0}} \AA^2 + dx)$ in the one dimensional space $(T_{y = 0}^* \AA^2)_0 + dx$ so \beq \dim Y^{\leq 0} - \dim(T_0^* \AA^2 \cap (T_{Y^{>0}}^* \AA^2 + dx)) = 0\eeq and there is no shift.

The Whittaker functional of the perverse sheaf $k_{x = 0}[1]$ is \beq \phi_{x, 0} i^! k_{x = 0}[1] = \phi_{x, 0} k_{x = 0}[-1] = k[-1].\eeq The resulting vector space becomes perverse after shifting by \beq \dim Y^{\leq 0} - \dim(T^*_{x = 0} \AA^2 \cap (T_{Y^{>0}}^* \AA^2 + dx)) = 1\eeq because the singular support $\Lambda = T^*_{x = 0} \AA^2$ intersects the shifted conormal bundle in a single point.
\end{ex}

\section{Application to automorphic sheaves}

Let  $X$ be a smooth connected projective complex curve with canonical bundle denoted by $\omega$.
Let $G$ be a complex reductive group with Borel subgroup $B\subset G$ with unipotent radical $N = [B,B]$ and universal Cartan $T=B/N$. For concreteness, we will fix a splitting $T\subset B$.

\subsection{The Whittaker functional under uniformization}\label{whit}
Let $\rho^{\vee}$ be half the sum of the positive coroots. Choose a square root $\omega^{1/2}$ of the canonical bundle and consider the $T$-bundle $\rho^{\vee}(\omega) := 2\rho^{\vee}(\omega^{1/2})$. Its key property is that for every simple root $\alpha$, the associated line bundle $\rho^{\vee}(\omega) \times_T \CC_{\alpha} = \omega^{\langle \rho^{\vee}, \alpha \rangle} = \omega$ is canonical.
Let $\Bun_N^{\omega}(X)$ be the moduli of $B$-bundles on $X$ whose underlying $T$-bundle is $\rho^{\vee}(\omega)$.  Thus $\Bun_N^{\omega}(X)$ classifies maps $X\to \pt/B$ such that the composition with $\pt/B\to \pt/T$ classifies the $T$-bundle $\rho^{\vee}(\omega)$. Such maps factor through the classifying space of  $B \times_T 2\rho^{\vee}(\BG_m) \simeq N \rtimes \BG_m$ as in the diagram:
\beq
\xymatrix{
X \ar@/_/[ddr]_{\omega^{1/2}} \ar@/^/[drr] \ar[dr]\\
&\pt/(N \rtimes \BG_m) \ar[d] \ar[r] & \pt/B \ar[d] \\
&\pt/\BG_m \ar[r]_{2\rho^{\vee}} & \pt/T}\eeq
 So alternatively $\Bun_N^{\omega}(X)$ is represented by maps to $\pt/(N \rtimes \BG_m)$ such that the composition to $\pt/\BG_m$ classifies $\omega^{1/2}$.

The semidirect product $N \rtimes \BG_m$ mentioned above is formed by letting $z \in \BG_m$ act on $N$ by conjugation by $2\rho^{\vee}(z) \in T$. In other words, $N \rtimes \BG_m \subset B$ is the subgroup of the Borel generated by $N$ and $2\rho^{\vee}(\BG_m)$. Consider the action of $N \rtimes \BG_m$ on $\frn^*$ by \beq \label{ScaleAdjointAction} z \cdot X = \Ad_{2\rho^{\vee}(z)}(z^2 X),\eeq the product of scaling and the adjoint $T$-action.
\begin{prop}
The cotangent bundle $T^*\Bun_N^{\omega}(X)$ is represented by maps $X \to \frn^*/(N \rtimes \BG_m)$, where we quotient by the \eqref{ScaleAdjointAction} action, such that the composition $X \to \pt/\BG_m$ classifies the line bundle $\omega^{1/2}$.
\end{prop}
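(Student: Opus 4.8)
The plan is to compute the cotangent bundle of $\Bun_N^\omega(X)$ fiberwise, using the description of $\Bun_N^\omega(X)$ as maps $X \to \pt/(N\rtimes\BG_m)$ lying over the fixed map $\omega^{1/2}\colon X\to\pt/\BG_m$. For a group $H$ acting on a point, the moduli of maps $X\to\pt/H$ has tangent complex at a bundle $P$ given by $R\Gamma(X, \ad P[1])$, and (since $X$ is a curve, so $R\Gamma$ is concentrated in degrees $0,1$ and the pairing is perfect by Serre duality) the cotangent fiber is $R\Gamma(X, \ad^* P\otimes\omega)[{-1}]$ — i.e. a point of $T^*$ is a Higgs-type field valued in the coadjoint bundle twisted by $\omega$. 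First I would make this precise for $H = N\rtimes\BG_m$ relative to the base $\pt/\BG_m$: the relative tangent complex of $\pt/(N\rtimes\BG_m)\to\pt/\BG_m$ is $\frn[1]$ with its $N\rtimes\BG_m$-action, so the relative cotangent complex is $\frn^*[{-1}]$, and hence $T^*\Bun_N^\omega(X)$ is the moduli of pairs: a $B$-bundle $P$ with underlying $T$-bundle $\rho^\vee(\omega)$, together with a section of $\frn^*_P\otimes\omega$, where $\frn^*_P = P\times_{N\rtimes\BG_m}\frn^*$ using the conjugation action.

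Next I would identify this data with maps $X\to\frn^*/(N\rtimes\BG_m)$ over $\omega^{1/2}$. The point is bookkeeping of twists: a section of $\frn^*_P\otimes\omega$ is the same as a lift of the classifying map $X\to\pt/(N\rtimes\BG_m)$ to $X\to\frn^*/(N\rtimes\BG_m)$ \emph{after} accounting for the extra $\omega$. The standard device is exactly the one already used in \ref{whit}: twisting by the line bundle $\omega$ is absorbed by letting $\BG_m$ act on $\frn^*$ with an additional scaling weight. Since the $\BG_m$ sitting inside $N\rtimes\BG_m$ is $2\rho^\vee(\BG_m)$ and it already classifies $\omega^{1/2}$, twisting the fiber $\frn^*$ by $\omega = (\omega^{1/2})^{\otimes 2}$ corresponds to adding $z\mapsto z^2$ to the action. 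Combined with the conjugation action $z\cdot X = \Ad_{2\rho^\vee(z)}(X)$ already present, this gives precisely the action \eqref{ScaleAdjointAction}, $z\cdot X = \Ad_{2\rho^\vee(z)}(z^2 X)$. So maps $X\to\frn^*/(N\rtimes\BG_m)$ with this twisted action, lying over $\omega^{1/2}\colon X\to\pt/\BG_m$, are the same as pairs $(P, \text{section of }\frn^*_P\otimes\omega)$, which is $T^*\Bun_N^\omega(X)$.

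Concretely I would organize the argument as: (i) recall that for $Z = \pt/H$ the cotangent of $\Maps(X,Z)$ at $P$ is $R\Gamma(X, \frh^*_P\otimes\omega)[{-1}]$ via Serre duality, and that this globalizes to the statement that $T^*\Maps(X,\pt/H)$ is the moduli of $H$-bundles with a section of the coadjoint bundle twisted by $\omega$, i.e. $\Maps(X, \frh^*_{\omega}/H)$ where $\frh^*_\omega$ denotes $\frh^*$ with the $\omega$-twist; (ii) specialize $H = N\rtimes\BG_m$ working relatively over $\pt/\BG_m$, so only the relative coadjoint directions $\frn^*$ contribute and the map to $\pt/\BG_m$ is unchanged; (iii) unwind the $\omega$-twist into the weight-$2$ scaling to land on \eqref{ScaleAdjointAction}. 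The main obstacle is step (iii): being careful that the twist introduced by the $\omega$-coefficient of the cotangent complex combines correctly with the conjugation action inherited from $N\rtimes\BG_m\subset B$, and in particular getting the scaling weight exactly right (it is $z^2$ because the fixed $\BG_m$ classifies $\omega^{1/2}$, not $\omega$). Everything else is the standard computation of cotangent complexes of mapping stacks into quotient stacks of a curve, plus Serre duality on $X$.
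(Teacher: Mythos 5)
Your proposal is correct and follows essentially the same route as the paper: identify the (relative) tangent complex of $\Bun_N^{\omega}(X)$ with $R\Gamma(X,\frn_F)[1]$, apply Serre duality on the curve to get $T_F^*\Bun_N^{\omega}(X)=H^0(\frn_F^*\otimes\omega)$, and absorb the $\omega$-twist into the weight-$2$ scaling (weight $2$ precisely because the structural $\BG_m$ classifies $\omega^{1/2}$), so that sections of $\frn_F^*\otimes\omega$ become lifts of the classifying map to $\frn^*/(N\rtimes\BG_m)$ with the action \eqref{ScaleAdjointAction}. The only cosmetic difference is that the paper obtains the tangent complex by viewing $\Bun_N^{\omega}(X)$ as a fiber of $\Bun_B(X)\to\Bun_T(X)$, whereas you work with the relative mapping stack over $\pt/\BG_m$; these are the same computation.
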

\begin{proof}
By definition $\Bun_N^{\omega}(X)$ is a fiber of the smooth (but not representable) map $p:\Bun_B(X) \to \Bun_T(X)$. The relative tangent complex of $p$ is $( \pi_* \frb_{F^{\univ}}[1]\to \pi_*\frt_{F^{\univ}}) \simeq \pi_* \frn_{F^{\univ}}[1]$ given by pushing forward vector bundles associated to the universal $B$-bundle $F^{\univ}$ along $\pi: X \times \Bun_B(X) \to \Bun_B(X)$. The tangent complex of $\Bun_N^{\omega}(X)$ is the restriction $\pi_* \frn_{F^{\univ}}|_{\Bun_N^{\omega}(X)}[1]$. Taking the stalk at a point $F \in \Bun_N^{\omega}(X)$ gives the tangent space $T_F \Bun_N^{\omega}(X) = H^1(\frn_F)$. By Serre duality the cotangent space is
\beq T^*_F\Bun_N^{\omega}(X) = H^0(\frn^*_F \otimes \omega).\eeq
Here $\frn^*_F$ is the vector bundle obtained from $F$ via the adjoint action of $B$ on $\frn^*$. Whereas $\frn^*_F \otimes \omega$ is obtained from $F$ via $N \rtimes \BG_m$ acting on $\frn^*$ by \eqref{ScaleAdjointAction}.

So giving a cotangent vector in $T_F^* \Bun_N^{\omega}(X)$ is equivalent to lifting the classifying map $X \to \pt/(N \rtimes \BG_m)$ of the bundle $F$ to a map $X \to \frn^*/(N \rtimes \BG_m)$. It was important that we modified the adjoint $B$-action on $\frn^*$ by also scaling so as to incorporate the canonical twist from Serre duality.
\end{proof}

Let $f: \Bun_N^{\omega}(X) \to \AA^1$ be the function given by the sum of the functions
\beq
\xymatrix{
\Bun_N^{\omega}(X) \ar[r] &  \Bun_{\BG_a}^{\omega}(X) \simeq H^1(X, \omega) \times BH^0(X, \omega) \ar[r] & H^1(X, \omega)\simeq \AA^1
}\eeq
induced by projection onto each simple root space $N \to N/[N,N] \to \BG_a$.
The graph of its differential $\Gamma_{df} \subset T^* \Bun_N^{\omega}(X)$ is represented by \beq \psi/(N \rtimes \BG_m) \subset \frn^*/(N \rtimes \BG_m)\eeq where
$\psi: \frn \to \frn/[\frn, \frn] \to \AA^1$
is given by summing over the simple root spaces.
To see that the expression $\psi/(N \rtimes \BG_m)$ makes sense we need to check that $\psi$ is invariant under the $(N \rtimes \BG_m)$-action. Indeed $\psi$ factors through the abelianization so it is $N$-invariant. Furthermore the adjoint action of $2\rho^{\vee}(z)$ scales the $\alpha_i$ simple root space component of $\psi$ by $z^{-\langle 2\rho^{\vee}, \alpha_i\rangle} = z^{-2}$ cancelling out the $\BG_m$-scaling action.

Let $\Bun_G(X)$ be the moduli of $G$-bundles on $X$. Recall
the cotangent bundle of $\Bun_G(X)$ is the moduli of Higgs bundles
\beq
T^* \Bun_G(X) \simeq \{E, \sigma \in H^0(\frg^*_E \otimes \omega)\}
\eeq
classifying maps $X\to \frg^*/(G\times \BG_m)$ such that the composition to $\pt/\BG_m$ classifies the line bundle $\omega$.
The global nilpotent cone $\Lambda \subset T^*\Bun_G(X)$ is the moduli of everywhere nilpotent Higgs bundles
\beq
\Lambda = \{E, \sigma \in H^0(\cN^*_E \otimes \omega)\}
\eeq
classifying maps  $X\to \cN^*/(G\times \BG_m)$
such that the composition to $\pt/\BG_m$ classifies the line bundle $\omega$.

The Whittaker functional
\beq
\xymatrix{
\phi_{f, \rho^{\vee}(\omega)} i^!: \Sh_{\Lambda}(\Bun_G(X)) \ar[r] & \Vect
}
\eeq
is  $!$-pullback along the natural  induction map
\beq
\xymatrix{
i : \Bun_N^{\omega}(X) \ar[r] & \Bun_G(X)
}\eeq followed  by vanishing cycles for $f$ at the point $\rho^{\vee}(\omega)$.
Note one could alternatively take global sections rather than stalk of the vanishing cycles $\phi_{f}$, but this will give the same result by the contraction principle (\cite{KS} Proposition 3.7.5).

To apply our general results, we would like to locally uniformize the moduli in play and replace them by smooth schemes.
To this end, fix a closed point $x \in X$.  

First, by taking $n$ large enough, we may factor $i$ through a closed embedding followed by a smooth projection
\beq
\xymatrix{
i:\Bun_N^{\omega}(X) \ar[r] &  \Bun^{\omega}_{G, N}(X, nx)  \ar[r] &  \Bun_G(X).
}
\eeq
Here $\Bun^{\omega}_{G, N}(X, nx)$ is the moduli space of $G$-bundles on $X$ with a $B$-reduction on the $n$th order neighborhood $D_n(x)$  whose underlying $T$-bundle is $\rho^{\vee}(\omega)|_{D_n(x)}$. The maps factoring $i$ are the natural induction maps; the second is clearly a smooth projection, and we will see momentarily that the first is a closed embedding.

Next,  introduce the moduli $\Bun_{G}^{\omega}(X, nx)$ classifying
 $G$-bundles on $X$ with a reduction on the $n$th order neighborhood $D_n(x)$  to the $\GG_m$-bundle $\om^{1/2}$
 via  the inclusion $2\rho^{\vee}:\GG_m \to T \subset G$.\footnote{Choosing a trivialization of $\om^{1/2}$ over  $D_n(x)$
  gives an isomorphism $\Bun_{G}(X, nx) \simeq \Bun_{G}^{\omega}(X, nx)$
  where $\Bun_{G}(X, nx)$ classifies
 $G$-bundles on $X$ with a trivialization over $D_n(x)$ .}
Form the following induction diagram with a Cartesian square.
\beq
\begin{tikzcd}
\Bun_N^{\omega}(X, nx) \arrow[r, "i'"] \arrow[d] & \Bun_{G}^{\omega}(X, nx) \arrow[d] & \\
i:\Bun_N^{\omega}(X) \arrow[r] & \Bun^{\omega}_{G, N}(X, nx) \arrow[r] & \Bun_G(X)
\end{tikzcd}
\eeq
Thus $\Bun_N^{\omega}(X, nx)$ classifies objects of $\Bun_N^{\omega}(X)$ with a reduction
on  $D_n(x)$  to the $\GG_m$-bundle $\om^{1/2}$
 via  the inclusion $2\rho^{\vee}:\GG_m \to T \subset B$.

Take a quasi-compact open substack $U \subset \Bun_G(X)$ containing the image of $\Bun_N^{\omega}(X)$. Then for $n$ sufficiently large, $\Bun_G^{\omega}(X, nx)|_U$ is a scheme.
Futhermore, at $F \in \Bun_N^{\omega}(X, nx)$, for $n$ sufficiently large, the codifferential
\beq
\xymatrix{
(di')^*:H^0(\frg^*_F \otimes \omega(nx)) \ar@{->>}[r] &  H^0(\frn^*_F \otimes \omega(nx))
}
\eeq
is surjective since $H^1(\frn^{\perp}_F \otimes \omega(nx)) = 0$. Moreover, we can choose $n \gg 0 $ once and for all uniformly over $\Bun_N^{\omega}(X)$ by quasi-compactness.

Thus for $n$ sufficiently large, since $i'$ is a map between smooth schemes with surjective codifferential, it is locally a closed embedding. Applying contraction for the natural $\GG_m$-action considered below, we see $i'$ is in fact a closed embedding. Also, $\Bun_N^{\omega}(X) \to \Bun^{\omega}_{G, N}(X, nx)$ is  a closed embedding  because $i'$ is a base-change of it via a surjective map.

The cotangent bundle $T^* \Bun_G^{\omega}(X, nx)$ classifies data
\beq
T^* \Bun_G^{\omega}(X, nx) = \{E, E|_{D_n(x)} \simeq G \times_T \rho^{\vee}(\omega)|_{D_n(x)}, \sigma \in H^0(\frg^*_E \otimes \omega(nx))\}.
\eeq
Singular support behaves well under smooth pullback. So if $\cF$ is a sheaf on $\Bun_G(X)$ with singular support in the nilpotent cone
\beq
\Lambda = \{E, \sigma \in H^0(\cN^*_E \otimes \omega) \}
\eeq
then the singular support of its smooth pullback to $\Bun_G^{\omega}(X, nx)$ lies in
\beq
\Lambda' = \Lambda \times_{\Bun_G(X)} \Bun_G^{\omega}(X, nx) = \{E, E|_{D_n(x)} \simeq G \times_T \rho^{\vee}(\omega)|_{D_n(x)},  \sigma \in H^0(\cN^*_E \otimes \omega) \}.
\eeq

\subsection{Hyperbolic symmetry}\label{hyp}

To apply Theorem~\ref{main}, we seek a $\BG_m$-action for which $i':\Bun_N^{\omega}(X, nx) \to \Bun_G^{\omega}(X, nx)$ is the attracting locus for the bundle $\rho^{\vee}(\omega)$. Let $f' : \Bun_N^{\omega}(X, nx) \to \AA^1$ be the pullback of $f$ to the uniformized moduli space. To apply Theorem~\ref{main} we also need $f'$ to be $\BG_m$-equivariant for some $\BG_m$ action on $\AA^1$ (which will turn out to have weight 2).

An automorphism $\alpha \in \Aut(G)$ induces an automorphism of $\Bun_G(X)$ by twisting the $G$-actions on the underlying bundles. A $G$-bundle $E$ goes to the $G$-bundle $\alpha E$ with the same total space but the old action of $g$ on $E$ is replaced by the new action of $\alpha(g)$ on $\alpha E$.\footnote{If $E$ is trivialized by $U \rightarrow X$ and described by gluing data $\phi \in H^0(U \times_X U, G)$ then $\alpha E$ is described by the cocycle $\alpha \circ \phi$.}
If the automorphism of $G$ is inner, say it is given by conjugation by $h \in G$, then the action on $\Bun_G(X)$ is entirely stacky in the sense that it is trivial on the set of isomorphism classes of points. Indeed the multiplication by $h$ map $h:E \rightarrow \alpha E$ intertwines the original action with the twisted one.

Suppose now that the automorphism $\alpha$ of $G$ is trivial on $T$. Then $\alpha$ also induces an automorphism of $\Bun_G^{\omega}(X, nx)$ with level structure. A $G$-bundle $E$ with canonically twisted trivialization $\phi: E|_{D_n(X)} \xrightarrow{\sim} G \times_T \rho^{\vee}(\omega)|_{D_n(x)}$ goes to the $G$-bundle $\alpha E$ with trivialization
\beq\xymatrix{
 \alpha E|_{D_n(X)} \simeq E|_{D_n(x)} \ar[r]^-{\phi}  & G \times_T \rho^{\vee}(\omega)|_{D_n(x)}  \ar[r]^-{\alpha} &  G \times_T \rho^{\vee}(\omega)|_{D_n(x)}.
 }
 \eeq The final map is trivial on the $\rho^{\vee}(\omega)$ factor and is well defined because we assumed that the automorphism $\alpha: G \rightarrow G$ is right $T$-invariant. For example if $\alpha(g) = hgh^{-1}$ is an inner automorphism and $h \in T$, then we get an automorphism of $\Bun_G^{\omega}(X, nx)$ that preserves the underlying bundle and changes the trivialization by conjugation by $h$. In other words $\alpha$ acts along the fibers of $\Bun_G^{\omega}(X, nx) \rightarrow \Bun_G(X)$.

\begin{remark} For simplicity ignore the canonical twist and suppose that $G$ is semisimple so we have one point uniformization, \beq \Bun_G(X, nx) = K_n \setminus G(K_x) / G(X - x).\eeq Here $K_n \subset G(\mathcal{O}_x)$ consists of matrices that are the identity to $n$th order. Then the inner automorphism $\alpha(g) = h g h^{-1}$ sends a double coset $K_n g G(X - x)$ to $K_n h g h^{-1} G(X - x) \simeq K_n h g G(X - x)$. Since $h^{-1}$ is a constant function we could absorb it into $G(X - x)$, so alternatively the action is given by changing the trivialization by left multiplication.
\end{remark}

Let $z \in \BG_m$ act on $G$ and $B$ by conjugation by $\rho^{\vee}(z) \in T/Z(G)$ in the torus of the adjoint group. This gives a $\BG_m$-action on the moduli spaces of bundles for which the natural maps between moduli spaces are equivariant.

\begin{prop}[4.7 of \cite{DG}]
Restrict to the connected component of $\Bun_B(X, nx)$ and $\Bun_T(X, nx)$  indexed by the coweight $(2g-2)\rho^{\vee}$.
Then for $n$ sufficiently large, $\Bun_B(X, nx)$ is the attracting locus to $\Bun_T(X, nx)$ in an open neighborhood of $\Bun_T(X, nx)$ inside $\Bun_G^{\omega}(X, nx)|_U$.
\end{prop}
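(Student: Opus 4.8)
The plan is to reduce the statement to an infinitesimal computation at the fixed locus $\Bun_T(X,nx)$ and then invoke a general principle: a $\BG_m$-action on a smooth scheme near a connected fixed locus $Z$ has attracting locus equal to a specified smooth subscheme $Z'$ (with $Z\subset Z'$) precisely when, along $Z$, the relative tangent space $T_{Z'/Z}$ is exactly the sum of the positive-weight summands of the $\BG_m$-representation $T_{\Bun_G^\omega(X,nx)}|_Z / T_Z$. So first I would recall that $z\in\BG_m$ acts on $G$ and $B$ by conjugation by $2\rho^\vee(z)$, fixing $T$; this makes all the induction maps between $\Bun_T(X,nx)$, $\Bun_B(X,nx)=\Bun_N^\omega(X,nx)$ (on the chosen component) and $\Bun_G^\omega(X,nx)$ equivariant, with $\Bun_T(X,nx)$ pointwise fixed since $\BG_m$ acts trivially on $T$.

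Second, I would compute the $\BG_m$-weights of the normal directions. At a $T$-bundle $F_T$ on the component indexed by $(2g-2)\rho^\vee$ — which after the canonical twist is exactly $\rho^\vee(\omega)$ — the tangent complex of $\Bun_G^\omega(X,nx)$ is computed from $H^*(X,\frg_{F_T}(-nx'))$-type terms (with appropriate level modification at $x$), and $\frg$ decomposes under $\Ad\circ 2\rho^\vee$ into weight spaces $\frg_\beta$ where $\beta$ runs over roots (plus the zero weight $\frt$). The root $\beta$ summand is a line bundle of degree $(2g-2)\langle\rho^\vee,\beta\rangle$, i.e. $\omega^{\langle\rho^\vee,\beta\rangle}$; for $\beta$ positive this has nonnegative degree and, crucially, for $n$ large the level structure at $x$ kills its $H^1$, so only $H^0$ survives and it sits in positive $\BG_m$-weight (the weight on $\frg_\beta$ is $\langle 2\rho^\vee,\beta\rangle>0$). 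For $\beta$ negative the corresponding bundle has large negative degree, so for $n$ large both $H^0$ and $H^1$ of the level-modified bundle behave so that the contribution lies in negative weight, giving the repelling directions. The zero-weight part $\frt$ gives exactly the tangent directions to $\Bun_T(X,nx)$. Thus the positive-weight part of the normal bundle to $\Bun_T$ inside $\Bun_G^\omega(X,nx)$ is $\bigoplus_{\beta>0}H^0(\text{level-modified }\frg_\beta\otimes\omega)$, which is precisely the relative tangent space of $\Bun_B(X,nx)\to\Bun_T(X,nx)$, i.e. $H^0$ of the level-modified $\frn_{F_T}\otimes\omega$. By Serre duality / the description in the previous subsection this matches the relative cotangent computation already established.

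Third, I would cite Bialynicki-Birula (in the form available for smooth schemes with a $\BG_m$-action, or the stacky version as in \cite{DG}): since $\Bun_T(X,nx)$ is smooth and $\BG_m$-fixed, the attracting locus $(\Bun_G^\omega(X,nx)|_U)^{\geq 0}$ is, in a neighborhood of $\Bun_T(X,nx)$, a smooth locally closed subscheme with tangent space along $\Bun_T$ equal to the nonnegative-weight part of the ambient tangent space. Since $\Bun_B(X,nx)$ is a smooth subscheme through $\Bun_T(X,nx)$ mapping to the attracting locus with the same tangent space there, and the attracting locus is connected along each component, the two agree in a neighborhood of $\Bun_T(X,nx)$. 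The one point needing care is the containment $\Bun_B(X,nx)\subseteq(\Bun_G^\omega)^{\geq 0}$ globally (not just near $\Bun_T$): this is the statement that every $\BG_m$-orbit through a point of $\Bun_B(X,nx)$ has a limit as $z\to 0$ lying in $\Bun_T(X,nx)$, which follows because the $\BG_m$-action on $\Bun_B(X,nx)$ itself contracts to $\Bun_T(X,nx)$ (degenerating a $B$-bundle to its associated $T$-bundle), together with equivariance of $\Bun_B(X,nx)\to\Bun_G^\omega(X,nx)$.

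The main obstacle, and the place I expect the bulk of the work, is the cohomology-vanishing bookkeeping with the level structure at $nx$: one must show that for $n\gg 0$ the level-modified root-space bundles have no $H^1$ for positive roots and contribute purely negatively for negative roots, uniformly over the quasicompact $U$, so that the weight decomposition of the normal bundle is clean and $\Bun_G^\omega(X,nx)|_U$ is genuinely a (smooth) scheme on which Bialynicki-Birula applies. This is exactly the kind of "for $n$ sufficiently large" argument already invoked in \ref{whit} for surjectivity of the codifferential, so I would handle it the same way — pick $n$ by quasicompactness — and then the identification of attracting locus with $\Bun_B(X,nx)$ is formal. Everything else (equivariance of the induction maps, the degree computation $\deg\frg_\beta=(2g-2)\langle\rho^\vee,\beta\rangle$, matching with the relative tangent space of $p$) is routine and can be cited from \cite{DG}.
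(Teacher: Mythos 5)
Your proposal is correct and follows essentially the same route as the paper: first show that $\Bun_B(X,nx)$ contracts into the attracting set (conjugation by $2\rho^{\vee}(z)$ degenerates a $B$-bundle to its induced $T$-bundle), then identify it with the attracting locus near $\Bun_T(X,nx)$ by a tangent-space computation along the fixed locus -- the paper phrases your Bialynicki--Birula step as: $p$ is a closed embedding into the attracting locus which is moreover an open embedding near $\Bun_T(X,nx)$, because the derivative $H^1(L_{\frb}(-nx)) \to H^1(L_{\frg}(-nx))$ maps isomorphically onto the non-negative $\BG_m$-weight space. One minor bookkeeping slip: with the level structure the relevant tangent spaces are $H^1$ of the $(-nx)$-twisted root-space bundles, not $H^0$ of $\omega$-twisted ones (these are Serre dual, exchanging $\beta$ with $-\beta$), so your weight decomposition is the dual picture but yields the same conclusion.
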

\begin{proof}
The $\BG_m$-action contracts $\Bun_B(X, nx)$ to $\Bun_T(X, nx)$ because $\rho^{\vee}(\BG_m)$ contracts $B$ to $T$. Indeed if $F$ is a $B$-bundle then acting by $\rho^{\vee}(z)$ gives a bundle with the same total space but $b$ acting by $\rho^{\vee}(z)b\rho^{\vee}(z)^{-1}$. As $z \rightarrow 0$, the conjugate $\rho^{\vee}(z)b\rho^{\vee}(z)^{-1}$ approaches an element of $T$ so the $B$-bundle approaches one induced from a $T$-bundle.

It remains to check $\Bun_B(X, nx)$ is the full attracting locus in an open neighborhood of $\Bun_T(X, nx)$ inside $\Bun_G^{\omega}(X, nx)$. This is because $p:\Bun_B(X, nx) \to \Bun_G^{\omega}(X, nx)|_U$ is a closed embedding (we are implicitly restricting to the connected component containing $\rho^{\vee}(\omega)$ and choosing $n$  large) so $p$ is a closed embedding into the attracting locus. Since $\Bun_B(X, nx)$ is smooth, it suffices to show that $p$ is also an open embedding into a neighborhood of the attracting locus about $\Bun_T(X, nx)$. This follows because the derivative over $L \in \Bun_T(X, nx)$, given by the natural map
\beq
\xymatrix{
T_L\Bun_B(X, nx) \simeq H^1(\frb_L(-nx)) \ar[r] &   T_L\Bun_G^{\omega}(X, nx) \simeq H^1(\frg_L(-nx)),
}\eeq
maps isomorphically into the non-negative $\BG_m$-weight spaces.
\end{proof}

Since $p: \Bun_B^{\omega}(X, nx) \rightarrow \Bun_T^{\omega}(X, nx)$ is $\BG_m$-equivariant, the fiber $\Bun_N^{\omega}(X, nx)$ also admits a $\BG_m$-action. But the action on $\Bun_N^{\omega}(X, nx)$ changes the bundles not just the trivializations because conjugation by $\rho^{\vee}(\BG_m)$ is an outer automorphism of $N$.

\begin{prop}
The function $f: \Bun_N^{\omega}(X) \rightarrow \AA^1$ is $\BG_m$-equivariant.
\end{prop}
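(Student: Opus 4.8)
The plan is to trace through the explicit description of $f$ as a sum of contributions indexed by simple roots and show each contribution transforms with weight $2$ under the $\BG_m$-action coming from conjugation by $2\rho^{\vee}(z)$. Recall $f$ is defined as the composition
\beq
\xymatrix{
\Bun_N^{\omega}(X) \ar[r] & \Bun_{\BG_a}^{\omega}(X) \simeq H^1(X,\omega) \times BH^0(X,\omega) \ar[r] & H^1(X,\omega) \simeq \AA^1
}
\eeq
summed over the projections $N \to N/[N,N] \to \BG_a$ onto each simple root space. So the first step is to understand how the $\BG_m$-action on $\Bun_N^{\omega}(X)$ interacts with these root-space projections. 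Since the $\BG_m$-action is conjugation by $2\rho^{\vee}(z) \in T$, it acts on the $\alpha_i$ simple root space of $\frn$ (equivalently on $N/[N,N]$ in the $\alpha_i$ direction) by the character $z \mapsto z^{\langle 2\rho^{\vee}, \alpha_i\rangle} = z^2$, using the defining property $\langle \rho^{\vee}, \alpha_i\rangle = 1$ for simple roots. This is the same calculation already used in Section~\ref{whit} to check that $\psi/(N\rtimes\BG_m)$ makes sense, where the adjoint action of $2\rho^{\vee}(z)$ on the $\alpha_i$-component was computed to scale by $z^{-2}$ on the dual side.

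Next I would make precise that this weight-$2$ scaling of the root-space projection $N \to \BG_a$ is compatible with the identification $\Bun^\omega_{\BG_a}(X) \simeq H^1(X,\omega)\times BH^0(X,\omega)$ and with the linear structure on $H^1(X,\omega)$: twisting a $\BG_a$-bundle (in the $\omega$-twisted sense built into $\Bun^\omega$) by the automorphism that scales $\Ga$ by $z^2$ scales the corresponding cohomology class in $H^1(X,\omega)$ by $z^2$. Concretely, an $\om$-twisted $N$-bundle on $X$ of the relevant type is the same as a $B$-bundle inducing the $T$-bundle $\rho^\vee(\om)$, and the $\alpha_i$-component of its ``unipotent part'' lives in $H^1(X,\om)$ because $\rho^\vee(\om)\times_T\CC_{\alpha_i} = \om$; the $\BG_m$-action replaces this component by its image under multiplication by $z^{\langle 2\rho^\vee,\alpha_i\rangle}=z^2$. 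Summing over the simple roots, $f$ goes to $z^2 f$, which is exactly $\BG_m$-equivariance for the weight-$2$ action on $\AA^1$. Finally, since $f' : \Bun_N^\omega(X,nx)\to\AA^1$ is the pullback of $f$ along the $\BG_m$-equivariant map $\Bun_N^\omega(X,nx)\to\Bun_N^\omega(X)$, it too is weight-$2$ equivariant, which is what is needed to apply Theorem~\ref{main}.

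The only real subtlety — and the step I would be most careful about — is bookkeeping the interaction between the intrinsic $\BG_m$-action on $\Bun_N^\omega(X)$ (which, as noted in the text preceding the statement, genuinely moves points because conjugation by $\rho^\vee(\BG_m)$ is an \emph{outer} automorphism of $N$) and the $\om$-twist built into the definition of $\Bun_N^\omega$. One must check that the $\BG_m$-action on the target $H^1(X,\om)$ obtained this way really is the naive scaling action, rather than something twisted by a character of $\BG_m$ coming from the $\om^{1/2}$ level structure; this is where the precise definition via maps to $\pt/(N\rtimes\BG_m)$ lying over the fixed classifying map for $\om^{1/2}$ is used — the $\BG_m$ in the semidirect product is exactly the one acting, so the $\om^{1/2}$ data is untouched and only the $N$-part (hence the root-space components) gets rescaled. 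Once this is pinned down, the weight is forced to be $\langle 2\rho^\vee,\alpha_i\rangle = 2$, uniformly in $i$, and the equivariance follows.
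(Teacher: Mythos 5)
Your proposal is correct and follows essentially the same route as the paper: reduce to each simple root, note that conjugation by $2\rho^{\vee}(z)$ scales the $\alpha_i$ root space by $z^{\langle 2\rho^{\vee},\alpha_i\rangle}=z^{2}$, and check that the induced action on $\Bun_{\BG_a}^{\omega}(X)$ is compatible with the projection to $H^1(X,\omega)\simeq\AA^1$. The only cosmetic difference is that the paper verifies this last compatibility via the one-point uniformization $\mathcal{O}_x dt\backslash K_xdt/\omega(X-x)$ and the residue map, whereas you appeal directly to linearity of the class in $H^1(X,\omega)$ in the gluing data; your extra care about the $\omega^{1/2}$ twist being untouched is consistent with the paper's setup.
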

\begin{proof}
For each positive simple root, projection onto that root space $N \rightarrow \BG_a$ is $\BG_m$-equivariant for the $\rho^{\vee}$ action on $N$ and the scaling action on $\BG_a$. Under uniformization \beq \Bun_{\BG_a}^{\omega}(X) = \mathcal{O}_x dt \setminus K_xdt / \omega(X - x) = H^1(X, \omega) \times BH^0(X, \omega)\eeq the scaling action on $\BG_a$ induces an action that scales the gluing data in $K_xdt$. The residue map $K_x dt \rightarrow \AA^1$ descends to the map $\Bun_{\BG_a}^{\omega}(X) \rightarrow \AA^1$ which is $\BG_m$-equivariant. Since $f$ is defined as the sum over positive simple roots of \beq
\xymatrix{
\Bun_N^{\omega}(X) \ar[r] &  \Bun_{\BG_a}^{\omega}(X)  \ar[r] &  \AA^1,
}\eeq it is $\BG_m$-equivariant.
\end{proof}

We are interested in sheaves on $\Bun_G^{\omega}(X, nx)$ pulled back from $\Bun_G(X)$ so they will certainly be $\BG_m$-equivariant. Alternatively, having singular support in $\Lambda' \subset T^*\Bun_G^{\omega}(X, nx)$, implies constructibility along the orbits of this $\BG_m$-action.

Applying theorem \ref{main} for $Y = \Bun_G^{\omega}(X, nx)$, $y_0 = \rho^{\vee}(\omega)$ with its canonical level structure, $Y^{> 0} = \Bun_N^{\omega}(X, nx)$, and $Y^{\leq 0} = \Bun_{B^-}(X, nx)$ gives the following.
Let $f'$ be the pullback to $\Bun_N^{\omega}(X, nx)$ of $f$ .

\begin{prop}
There is an isomorphism of functors
\beq
\phi_{f', \rho^{\vee}(\omega)} i'^! = \phi_{F, \rho^{\vee}(\omega)}: \Sh_{\Lambda'}(\Bun_G^{\omega}(X, nx)) \to \Vect.
\eeq
Here $F$ is a real valued extension of $f'$ as in \eqref{FDef}.
\end{prop}

Vanishing cycles commutes with smooth pullback, so $\phi_{f, \rho^{\vee}(\omega)} i^!$ and $\phi_{f', \rho^{\vee}(\omega)}i'^!$ agree up to a shift
\beq \label{VanishingShiftDifference}
\phi_{f, \rho^{\vee}(\omega)}i^! [2n\dim N] \simeq \phi_{f', \rho^{\vee}(\omega)} i'^! \pi^!.
\eeq
Therefore we are free to pull everything back to $\Bun_G^{\omega}(X, nx)$ where $F$ is defined. Note that $!$-pullback along
\beq
\xymatrix{
 \pi:\Bun_G^{\omega}(X, nx) \ar[r] &  \Bun_G(X)
 } \eeq
is not exact, but by smoothness $\pi^![-n \dim G]$ is.

\subsection{Microstalk along the Kostant section}\label{kost}
Now we will explain how the shifted conormal is the Kostant section of the Hitchin fibration and therefore intersects the global nilpotent cone transversely in a single smooth point.

\begin{prop}\label{KostNilp}
Inside $T^* \Bun_G(X)$ the shifted conormal bundle $T^*_{\Bun_N^{\omega}(X)}\Bun_G(X) + df$\footnote{Here $T^*_{\Bun_N^{\omega}}\Bun_G + df$ consists of  points in $T^* \Bun_G(X) \times_{\Bun_G(X)} \Bun_N^{\omega}(X)$ that under the codifferential of $\Bun_N^{\omega}(X) \rightarrow \Bun_G(X)$ land in the graph $\Gamma_{df} \subset T^* \Bun_N^{\omega}(X)$. Calling it the shifted conormal is a little misleading because $\Bun_N^{\omega}(X) \rightarrow \Bun_G(X)$ is not a closed embedding.} intersects the global nilpotent cone $\Lambda$ transversely at a smooth point.
\end{prop}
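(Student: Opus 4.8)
The plan is to identify the shifted conormal $T^*_{\Bun_N^\omega(X)}\Bun_G(X) + df$ with the Kostant section of the Hitchin system, for which transversality to $\Lambda$ at a single smooth point is classical. First I would set up the pointwise (i.e. along the curve $X$) picture: using the descriptions of $T^*\Bun_G(X)$ as maps $X \to \frg^*/(G\times\BG_m)$ and of $T^*\Bun_N^\omega(X)$ as maps $X \to \frn^*/(N\rtimes\BG_m)$ established in the previous subsection, the codifferential of $i:\Bun_N^\omega(X)\to\Bun_G(X)$ is induced by the $N\rtimes\BG_m$-equivariant projection $\frg^* \to \frn^*$ dual to $\frn\hookrightarrow\frg$. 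Hence a point of the shifted conormal over $F\in\Bun_N^\omega(X)$ is a Higgs field $\sigma\in H^0(\frg^*_F\otimes\omega)$ whose image in $H^0(\frn^*_F\otimes\omega)$ is the fixed section $\psi/(N\rtimes\BG_m)$ (the sum over simple root spaces) computed earlier as $\Gamma_{df}$. Unwinding the twist by $\rho^\vee(\omega)$, this says exactly that, fiberwise over each point of $X$, $\sigma$ lies in the affine subspace $e + \frb^{\perp}$ of $\frg^* \cong \frg$ (identifying via a $G$-invariant form), where $e = \sum_i e_{\alpha_i}$ is a principal nilpotent and $\frb^\perp = \frn$ — i.e. $\sigma$ lies pointwise in the classical Kostant slice $\kappa\colon \frg \quot G \to \frg$, suitably twisted by $\omega$.

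Next I would invoke Kostant's theorem: the slice $e + \frz_\frg(f)$ (or equivalently the description via $e+\frb^\perp$ after the standard identifications, cf. the $\sl_2$-triple $(e,h,f)$ with $f=\sum f_{\alpha_i}$) meets every $G$-orbit closure transversally, maps isomorphically to $\frg\quot G = \Spec(\CC[\frg]^G)$, and in particular meets the nilpotent cone $\mathcal N$ in the single point $e$, transversally inside $\frg$. Twisting by $\omega$ along $X$ and taking global sections, this globalizes: the image $K$ of the Kostant section $\kappa:\frc_G^*(X)\to T^*\Bun_G(X)$ is a Lagrangian (it is a section of the Hitchin map $h$, and the Hitchin fibers are Lagrangian, so $K$ is Lagrangian by the standard argument that a section of a Lagrangian fibration transverse to fibers — here $\kappa$ is literally a section — is Lagrangian once one checks the symplectic form vanishes, which follows from $\kappa$ being algebraic and $h\circ\kappa = \id$). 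The scheme-theoretic intersection $K\cap\Lambda$ is then the global sections of the pointwise intersection of the $\omega$-twisted slice with $\mathcal N^*\otimes\omega$, which by Kostant is a single reduced point, namely $\lambda = \kappa(0)$, the Higgs bundle with underlying bundle $\rho^\vee(\omega)\times_T\cdots$ extended to $G$ and Higgs field the principal nilpotent section. Transversality at $\lambda$ then follows by counting dimensions: $\dim K = \dim\Lambda = \dim\Bun_G(X)$ and $T_\lambda K \cap T_\lambda\Lambda = 0$ because the pointwise intersection is transverse in each fiber and $H^1$ of the relevant negative bundle vanishes, so the first-order deformations don't interact.

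I would carry out the steps in this order: (1) unwind the codifferential of $i$ in the Higgs-bundle description to see the shifted conormal is cut out by the pointwise condition $\sigma\in e+\frn$ (twisted); (2) match $e+\frn$ with the Kostant slice using a chosen $\sl_2$-triple and invariant form, being careful about the role of the $\rho^\vee(\omega)$-twist and the weight-$2$ scaling in \eqref{ScaleAdjointAction} so that the twisted slice is well-defined as a subscheme of $\frg^*/(G\times\BG_m)$; (3) quote Kostant's theorem fiberwise; (4) globalize by applying $H^0(X,-\otimes\omega)$ and checking the relevant $H^1$ vanishes (this is where the genericity/largeness already built into the setup, and the fact that $\lambda$ is a smooth point of $\Lambda$ coming from the regular locus, get used); (5) conclude $K=T^*_{\Bun_N^\omega(X)}\Bun_G(X)+df$ meets $\Lambda$ transversely in the one reduced point $\lambda$, and $\lambda$ is a smooth point of $\Lambda$ since the regular nilpotent locus of $\mathcal N$ is smooth.

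The main obstacle is step (2): bookkeeping the twists correctly. One must verify that the $(N\rtimes\BG_m)$-twisted affine subspace $\psi + \frn^\perp \subset \frg^*$ really does glue, over the curve $X$ with its $\rho^\vee(\omega)$-twist, to a copy of the $\omega$-twisted Kostant slice — i.e. that the weight-$2$ correction in \eqref{ScaleAdjointAction} is exactly what makes $e\,(=\!\sum e_{\alpha_i})$, which naturally lives in $\frg\otimes\omega$ after twisting by $\rho^\vee(\omega)$ (since each $e_{\alpha_i}$ has $T$-weight $\alpha_i$ and $\rho^\vee(\omega)$ contributes $\omega^{\langle\rho^\vee,\alpha_i\rangle}=\omega$), assemble into a genuine global Higgs field, and that $e+\frb^\perp$ is preserved and has the right weights. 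Once this identification is pinned down, the transversality and the single-smooth-point claim are immediate consequences of Kostant's theorem plus a vanishing-of-$H^1$ argument; the content of Proposition~\ref{KostNilp} is really the bridge between the microlocal/Whittaker side and the classical Hitchin/Kostant side, so I expect the write-up to be short modulo the twist verification.
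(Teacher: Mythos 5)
Your step (1)--(2) --- unwinding the codifferential in the Higgs-bundle presentation so that the shifted conormal becomes the locus of Higgs fields lying pointwise in $\psi+\frn^{\perp}$, i.e.\ the $(N\rtimes\BG_m)$-twisted Kostant slice, with the weight-$2$ correction \eqref{ScaleAdjointAction} doing the bookkeeping --- is exactly the paper's first step. Where you diverge is the transversality argument, and there your sketch has a genuine gap. You deduce transversality from a dimension count ($\dim K=\dim\Lambda=\dim\Bun_G(X)$ together with $T_\lambda K\cap T_\lambda\Lambda=0$), but this presupposes $\dim T_\lambda\Lambda=\dim\Bun_G(X)$, i.e.\ smoothness of the global nilpotent cone at $\lambda$, which you justify only by ``the regular nilpotent locus of $\cN$ is smooth.'' Pointwise smoothness of the target does not give smoothness (or the correct tangent dimension) of a global mapping space --- that is precisely the global statement needing proof, as is the reducedness of $K\cap\Lambda$ and the claim $T_\lambda K\cap T_\lambda\Lambda=0$. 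Your appeal to ``$H^1$ of the relevant negative bundle vanishes'' leaves the bundle unspecified, and identifying the correct associated bundles and their $\omega$-weights under the $\rho^\vee(\omega)$-twist is exactly where such a computation is delicate; as written, fiberwise Kostant transversality plus an unnamed vanishing does not yield the global statement.

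The paper's proof avoids all of this: since $\psi+\frn^{\perp}$ consists of regular elements, the Kostant section lands in the open regular locus $T^*\Bun_G(X)^{\reg}$, over which the Hitchin map $h$ is smooth; a section of a smooth projection meets every fiber --- in particular $\Lambda=h^{-1}(0)$ --- transversely at a smooth point of the fiber. Note also that $T_\lambda K\cap T_\lambda\Lambda=0$ comes for free from the section property ($dh\circ d\kappa=\id$), so no fiberwise transversality or cohomology vanishing is needed for that part; the single nontrivial input is smoothness of $h$ over the regular locus, which simultaneously supplies the smoothness of $\Lambda$ at $\lambda$ that your argument assumed. Your digression on $K$ being Lagrangian plays no role in the proposition. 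If you want to salvage your route, you must actually prove the two global statements (smoothness of $\Lambda$ at $\lambda$ and the tangent-space complementarity) by a deformation-theoretic computation with the $\rho^\vee(\omega)$-twisted centralizer bundles --- at which point the ``section of a smooth fibration'' argument is both shorter and less error-prone.
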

\begin{proof}
The shifted conormal bundle consists of cotangent vectors $X \to \frg^*/(N \rtimes \BG_m)$ in $(T^*\Bun_G(X))|_{\Bun_N^{\omega}(X)}$ such that the composition \beq
\xymatrix{
X \ar[r] &  \frg^*/(N \rtimes \BG_m) \ar[r] &  \frn^*/(N \rtimes \BG_m)
}
\eeq lands in $\psi/(N \rtimes \BG_m)$. Therefore $T^*_{\Bun_N^{\omega}(X)}\Bun_G + df$ is represented by the Kostant section
\beq\xymatrix{
(\psi + \frn^{\perp})/(N \rtimes \BG_m) \ar[r] &  \frg^*/(G \times \BG_m).}\eeq
We used the homomorphism
\beq
N \rtimes \BG_m \to G \times \BG_m, \qquad n \rtimes z \mapsto (n \cdot 2\rho^{\vee}(z), z^2).
\eeq
This is a section of the characteristic polynomial map
$\frg^*/(G \times \BG_m) \to \frc^* / \BG_m$
which represents the Hitchin map \beq\xymatrix{ h: T^* \Bun_G(X) \ar[r] &  \frc^*_G(X).}\eeq

Let $T^* \Bun_G(X)^{\reg}$ be the regular locus, represented by $\frg^{*, \reg}/(G \times \BG_m) \subset \frg^*/(G \times \BG_m)$. It is an open substack of $T^* \Bun_G(X)$ because $\frg^{*, \reg} \subset \frg^*$ is open and $X$ is proper. The Hitchin fibration $h$ is smooth after restricting to this regular locus. Since $\psi + \frn^{\perp} \subset \frg^{*, \reg}$ consists of regular elements the Kostant section is contained in $T^* \Bun_G(X)^{\reg}$. After restricting to $T^* \Bun_G(X)^{\reg}$, the Kostant section is a section of a smooth projection so intersects every fiber, in particular the global nilpotent cone, transversely at a smooth point.
\end{proof}

Since $i$ is not a closed embedding we factored it through $\Bun_N^{\omega}(X) \rightarrow \Bun_{G, N}^{\omega}(X, nx)$.

\begin{prop}\label{KostNilp2}
Inside $T^* \Bun_{G, N}^{\omega}(X, nx)$, the shifted conormal bundle \beq \Lambda_f'' := df + T^*_{\Bun_N^{\omega}(X)} \Bun^{\omega}_{G, N}(X, nx)\eeq intersects the global nilpotent cone
\beq \Lambda'' = \Lambda \times_{\Bun_G(X)} \Bun^{\omega}_{G, N}(X, nx)\eeq
transversely at a single smooth point.
\end{prop}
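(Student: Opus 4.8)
The plan is to transport the transverse clean intersection of Proposition~\ref{KostNilp} up through the smooth morphism $\pi:\Bun^{\omega}_{G,N}(X,nx)\to\Bun_G(X)$. Recall that $i$ factors as $\Bun_N^{\omega}(X)\xrightarrow{\,j\,}\Bun^{\omega}_{G,N}(X,nx)\xrightarrow{\,\pi\,}\Bun_G(X)$ with $j$ a closed embedding and $\pi$ smooth, so $di^{*}=dj^{*}\circ d\pi^{*}$ and the codifferential $d\pi^{*}$ realizes the ``horizontal'' subbundle $T^{*}\Bun_G(X)\times_{\Bun_G(X)}\Bun^{\omega}_{G,N}(X,nx)$ — in the Higgs picture, the Higgs fields with no pole at $x$ — as a subbundle of $T^{*}\Bun^{\omega}_{G,N}(X,nx)$. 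By construction $\Lambda''=\Lambda\times_{\Bun_G(X)}\Bun^{\omega}_{G,N}(X,nx)$ is the full pullback $d\pi(\pi^{-1}\Lambda)$; it lies in the horizontal subbundle, and since $\pi$ is smooth it is smooth exactly where $\Lambda$ is.

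First I would locate the intersection. Since $\Lambda''$ is horizontal and $\Lambda_f''$ lies over the closed substack $j(\Bun_N^{\omega}(X))$, any point of $\Lambda_f''\cap\Lambda''$ lies in the horizontal subbundle over $j(\Bun_N^{\omega}(X))$. Over that locus $\pi$ restricts to the closed embedding $i$, so $(\pi,d\pi^{*})$ identifies this piece of the horizontal subbundle with $T^{*}\Bun_G(X)|_{i(\Bun_N^{\omega}(X))}$, carrying $\Lambda''$ to $\Lambda$; and a horizontal covector $d\pi^{*}\eta$ over $j(q)$ lies in $\Lambda_f''$ precisely when $df=dj^{*}(d\pi^{*}\eta)=di^{*}\eta$, i.e.\ when $\eta$ lies in the downstairs shifted conormal $T^{*}_{\Bun_N^{\omega}(X)}\Bun_G(X)+df$. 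Thus this identification carries $\Lambda_f''\cap\Lambda''$ onto $\bigl(T^{*}_{\Bun_N^{\omega}(X)}\Bun_G(X)+df\bigr)\cap\Lambda$, which by Proposition~\ref{KostNilp} is a single smooth point; in particular $\Lambda''$ is smooth there.

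It remains to check transversality upstairs, $T\Lambda_f''+T\Lambda''=T\bigl(T^{*}\Bun^{\omega}_{G,N}(X,nx)\bigr)$ at that point. In local analytic coordinates in which $\pi$ is a linear projection $\CC^{a+b}\to\CC^{a}$ and $j(\Bun_N^{\omega}(X))$ is cut out by $g(u)=0,\ v=0$, one computes directly that $T\Lambda''$ contains the pullback of $T\Lambda$ together with all base directions along the fibers of $\pi$, while $T\Lambda_f''$ contains the pullback of the tangent to the downstairs shifted conormal together with all covector directions dual to the fibers of $\pi$. Their sum therefore fills out every direction related to the fibers of $\pi$, and in the complementary directions it reduces to the sum of $T\Lambda$ with the tangent to the downstairs shifted conormal, which is everything by the transversality in Proposition~\ref{KostNilp}. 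Hence the intersection is transverse, and therefore a single smooth point.

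The step I expect to be the main obstacle is the bookkeeping just described: $\Lambda_f''$ is \emph{not} the naive pullback $d\pi(\pi^{-1}(T^{*}_{\Bun_N^{\omega}(X)}\Bun_G(X)+df))$ — that pullback lives over the larger locus $\pi^{-1}(i(\Bun_N^{\omega}(X)))$ and carries extra polar covector directions — so one cannot simply quote ``smooth pullback preserves clean transverse intersections.'' The real content is that intersecting with the horizontal Lagrangian $\Lambda''$ forces those polar components to vanish and cuts $\Lambda_f''$ back down to the Kostant section, so that Proposition~\ref{KostNilp} applies; making this precise is exactly the local-coordinate verification indicated above, and the remaining points are routine.
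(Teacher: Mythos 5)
Your overall route is the same as the paper's: reduce to Proposition~\ref{KostNilp} by checking that the directions along the fibers of $\pi$ are absorbed by $T\Lambda''$ and the covector directions dual to those fibers are absorbed by $T\Lambda_f''$ (the paper packages exactly this bookkeeping as transversality of $\Lambda_f''$ to the coisotropic $\mu^{-1}(0)/N$, then quotes Proposition~\ref{KostNilp} inside it). However, there is a genuine gap in how you set this up. You assert that ``over that locus $\pi$ restricts to the closed embedding $i$'' and choose coordinates in which $j(\Bun_N^{\omega}(X))$ is cut out by $g(u)=0,\ v=0$. But $i:\Bun_N^{\omega}(X)\to\Bun_G(X)$ is \emph{not} a closed embedding --- the paper stresses this, and it is the entire reason $\Bun^{\omega}_{G,N}(X,nx)$ was introduced --- so the identification of the horizontal subbundle over $j(\Bun_N^{\omega}(X))$ with $T^*\Bun_G(X)|_{i(\Bun_N^{\omega}(X))}$ is false in general (it is only a base change), and the normal form $\{g(u)=0,\ v=0\}$, which requires $j(\Bun_N^{\omega}(X))$ to lie in a local section of $\pi$, i.e.\ $i$ to be a local embedding near $\rho^{\vee}(\omega)$, is exactly what needs proof. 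The locating of the intersection survives this if one reads the ``shifted conormal'' of Proposition~\ref{KostNilp} as a space over $\Bun_N^{\omega}(X)$ (so the intersection is a fiber product), but your transversality step genuinely uses the normal form: without it, the claim that $T\Lambda_f''$ contains \emph{all} covector directions dual to the fibers of $\pi$ fails, since adding such a covector changes the condition $\zeta|_{T j(\Bun_N^{\omega}(X))}=df$ whenever the tangent space of $j(\Bun_N^{\omega}(X))$ has a nonzero component along $\ker d\pi$.

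What actually saves the argument, and what you never verify, is the pointwise fact at the single relevant point: $di$ is injective on $T_{\rho^{\vee}(\omega)}\Bun_N^{\omega}(X)$ (equivalently the codifferential $T^*_{\rho^{\vee}(\omega)}\Bun_G(X)\to T^*_{\rho^{\vee}(\omega)}\Bun_N^{\omega}(X)$ is surjective), which holds here because the $B$-bundle $\rho^{\vee}(\omega)$ is induced from a $T$-bundle, so the sequence $0\to\frn_{\rho^{\vee}(\omega)}\to\frg_{\rho^{\vee}(\omega)}\to(\frg/\frn)_{\rho^{\vee}(\omega)}\to0$ splits; at general points of $\Bun_N^{\omega}(X)$ this fails, which is precisely why $i$ is not an embedding. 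This is the step the paper isolates (the vertical part of $T(\mu^{-1}(0)/N)$ is $T^*_{\rho^{\vee}(\omega)}\Bun_G(X)$, which surjects onto $T^*_{\rho^{\vee}(\omega)}\Bun_N^{\omega}(X)$, so together with the conormal direction of $\Lambda_f''$ it spans the vertical). Your argument can be repaired by working only with tangent spaces at $df(\rho^{\vee}(\omega))$: use the pointwise injectivity above to choose a linear splitting of $T_{j(\rho^{\vee}(\omega))}\Bun^{\omega}_{G,N}(X,nx)$ whose horizontal piece contains $T_{\rho^{\vee}(\omega)}\Bun_N^{\omega}(X)$; then your linear algebra goes through verbatim. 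As written, though, the unproved normal form (and the false closed-embedding claim it rests on) is a real gap, and it sits exactly at the point you yourself flagged as the main obstacle.
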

\begin{proof}
The nilpotent cone $\Lambda''$ is contained inside
\beq \mu^{-1}(0)/N := T^* \Bun_G(X) \times_{\Bun_G(X)} \Bun^{\omega}_{G, N}(X, nx). \footnote{The notation $\mu^{-1}(0)/N$ can just be regarded as shorthand, but here is an explanation. The $G(D_n(x))$ action on $T^*\Bun_G^{\omega}(X, nx)$ has a moment map $\mu: T^*\Bun_G^{\omega}(X, nx) \rightarrow \frg^*(D_n(x))$. And $T^* \Bun_{G, N}^{\omega}(X) = \mu^{-1}(\frn^{\perp}(D_n(x)))/N(D_n(x))$ can be described by Hamiltonian reduction for the $N(D_n(x))$-action. So $\mu^{-1}(0)/N(D_n(x))$ is the closed subspace of $T^* \Bun_{G, N}^{\omega}(X, nx)$ where we impose the further condition that the moment map lands in $0 \in \frn^{\perp}(D_n(x))$.} \eeq
Whereas we claim that the shifted conormal $\Lambda_f''$  intersects  $\mu^{-1}(0)/N$ transversely. By the previous Proposition \ref{KostNilp}, \beq \Lambda_f'' \cap (\mu^{-1}(0)/N) = df + T^*_{\Bun_N^{\omega}(X)} \Bun_G(X)\eeq intersects $\Lambda''$ transversely inside $\mu^{-1}(0)/N$ at a single smooth point of $\Lambda''$. Therefore $\Lambda_f''$ and $\Lambda''$ intersect transversely as desired.

To see the claim that $\Lambda_f''$ and $\mu^{-1}(0)/N$ intersect transversely we need to check that their tangent spaces at $df(\rho^{\vee}(\omega))$ together span the whole $T_{df(\rho^{\vee}(\omega))} T^* \Bun_{G, N}^{\omega}(X, nx)$. Projecting onto the horizontal directions, $T_{df(\rho^{\vee}(\omega))} T^* \Bun_{G, N}^{\omega}(X, nx)$ fits into a short exact sequence \beq 0 \rightarrow T^*_{\rho^{\vee}(\omega)} \Bun_{G, N}^{\omega}(X, nx) \rightarrow T_{df(\rho^{\vee}(\omega))} T^* \Bun_{G, N}^{\omega}(X, nx) \rightarrow T_{\rho^{\vee}(\omega)} \Bun_{G, N}^{\omega}(X, nx) \rightarrow 0.\eeq The tangent space to $\mu^{-1}(0)/N$ surjects onto $T_{\rho^{\vee}(\omega)} \Bun_{G, N}^{\omega}(X, nx)$ so it suffices to show that the tangent spaces to $\Lambda_f''$ and $\mu^{-1}(0)/N$ intersected with the vertical subspace $T^*_{\rho^{\vee}(\omega)} \Bun_{G, N}^{\omega}(X, nx)$ together span the whole $T^*_{\rho^{\vee}(\omega)} \Bun_{G, N}^{\omega}(X, nx)$.
The vertical part of the tangent space to $\Lambda_f''$ is the conormal space $(T^*_{\Bun_N^{\omega}(X)} \Bun_{G, N}^{\omega}(X, nx))_{\rho^{\vee}(\omega)}$, which is by definition the kernel in a short exact sequence \beq 0 \rightarrow (T^*_{\Bun_N^{\omega}(X)} \Bun_{G, N}^{\omega}(X, nx))_{\rho^{\vee}(\omega)} \rightarrow T^*_{\rho^{\vee}(\omega)} \Bun_{G, N}^{\omega}(X, nx) \rightarrow T^*_{\rho^{\vee}(\omega)} \Bun_N^{\omega}(X) \rightarrow 0.\eeq The vertical part of the tangent space to $\mu^{-1}(0)/N$ is $T^*_{\rho^{\vee}(\omega)} \Bun_G(X)$ which surjects onto the cokernel $T^*_{\rho^{\vee}(\omega)} \Bun_N^{\omega}(X)$. So together the tangent spaces to $\Lambda_f''$ and $\mu^{-1}(0)/N$ span $T^*_{\rho^{\vee}(\omega)} \Bun_{G, N}^{\omega}(X, nx)$.
\end{proof}

Pulling back to $i':\Bun_N^{\omega}(X, nx) \rightarrow \Bun_G^{\omega}(X, nx)$ the intersection is no longer transverse but still clean.

\begin{prop}
Inside $T^* \Bun_G^{\omega}(X, nx)$, the shifted conormal bundle \beq \Lambda_f' := T^*_{\Bun_N^{\omega}(X, nx)} \Bun_G^{\omega}(X, nx) + df'\eeq intersects the global nilpotent cone $\Lambda'$ cleanly along smooth points. The intersection is $n \dim N$ dimensional.
\end{prop}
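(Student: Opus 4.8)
The plan is to reduce the statement to the transverse intersection established in Proposition~\ref{KostNilp2} by pulling back along the smooth map $q:\Bun_N^{\omega}(X,nx)\to \Bun^{\omega}_{G,N}(X,nx)$. Recall that $\Bun_N^{\omega}(X,nx)\to \Bun^{\omega}_{G,N}(X,nx)$ is the base change of $\Bun_G^{\omega}(X,nx)\to\Bun_G(X)$, hence a smooth surjection with fibers of dimension $n\dim N$; likewise $\Bun_N^{\omega}(X,nx)\to\Bun_N^{\omega}(X)$ is smooth of the same relative dimension. First I would record how the relevant Lagrangians transform under the correspondence $T^*\Bun_G^{\omega}(X,nx)\leftarrow T^*\Bun^{\omega}_{G,N}(X,nx)\times_{\Bun^{\omega}_{G,N}(X,nx)}\Bun_G^{\omega}(X,nx)\to T^*\Bun^{\omega}_{G,N}(X,nx)$ associated to the smooth map $q$: by the behavior of singular support under smooth pullback (Propositions 5.4.4 of \cite{KS}, recalled in Section~2), $\Lambda'=dq(q^{-1}(\Lambda''))$, and since $df'$ is by definition the pullback of $df$, the shifted conormal $\Lambda_f'=T^*_{\Bun_N^{\omega}(X,nx)}\Bun_G^{\omega}(X,nx)+df'$ is likewise $dq(q^{-1}(\Lambda_f''))$. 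Thus both $\Lambda'$ and $\Lambda_f'$ are pulled back, in the Lagrangian-correspondence sense, from $\Lambda''$ and $\Lambda_f''$ along a smooth morphism.

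The key step is then the general fact that if $\lambda_1,\lambda_2\subset T^*B$ are conic Lagrangians meeting transversely at a smooth point $b$ lying over a point where $q:A\to B$ is smooth, then their pullbacks $q^\circ\lambda_1,q^\circ\lambda_2\subset T^*A$ meet cleanly along the fiber $dq(q^{-1}(b))$, which is smooth of dimension equal to the relative dimension of $q$. Concretely, working in local coordinates where $q$ is a projection $A=B\times\AA^N\to B$, the pullback of a conic Lagrangian $\lambda\subset T^*B$ is $q^\circ\lambda=\{(b,v,\xi,0):(b,\xi)\in\lambda\}\subset T^*A=T^*B\times T^*\AA^N$; two such, $q^\circ\lambda_1$ and $q^\circ\lambda_2$, intersect in $\{(b,v,\xi,0):(b,\xi)\in\lambda_1\cap\lambda_2\}$, which over a transverse point $(b,\xi)$ is a copy of $\AA^N$ (the fiber direction), and cleanness is the assertion that the tangent space to this intersection equals the intersection of tangent spaces — which reduces to the transversality of $\lambda_1,\lambda_2$ in the $T^*B$-factor together with the fact that the $\AA^N$-directions are common to both. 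Applying this with $B=\Bun^{\omega}_{G,N}(X,nx)$, $A=\Bun_N^{\omega}(X,nx)$, $\lambda_1=\Lambda''$, $\lambda_2=\Lambda_f''$, and $b=df(\rho^{\vee}(\omega))$, Proposition~\ref{KostNilp2} supplies the transversality hypothesis, and the relative dimension $N=\dim A-\dim B$ equals $n\dim N$ by the fiber-dimension count above; the smoothness of $\Lambda'$ near the intersection follows from the smoothness of $\Lambda''$ there since $q^\circ$ of a smooth Lagrangian is smooth.

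The main obstacle I anticipate is purely bookkeeping rather than conceptual: one must be careful that $\Lambda_f'$ really is the smooth pullback of $\Lambda_f''$, given the footnoted subtlety that $i:\Bun_N^{\omega}(X)\to\Bun_G(X)$ is not a closed embedding and so ``$df+T^*_{\Bun_N^{\omega}}$" is shorthand for the preimage of $\Gamma_{df}$ under a codifferential rather than an honest shifted conormal. This is handled by the surjectivity of the codifferential $di^*$ established in Section~\ref{whit} (for $n\gg0$ over the quasi-compact $U$), which guarantees that the fiber product defining $\Lambda_f'$ is of the expected dimension and is smooth over the locus of interest, so that passing to tangent spaces commutes with the operations of restriction and pullback — exactly the compatibility invoked in the proof of Proposition~\ref{MaslovIndex}. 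Granting this, assembling the short exact sequences relating the tangent spaces of $\Bun_N^{\omega}(X,nx)$, $\Bun^{\omega}_{G,N}(X,nx)$ and their cotangent bundles (as in the proof of Proposition~\ref{KostNilp2}) makes the clean-intersection statement and the dimension count $n\dim N$ immediate.
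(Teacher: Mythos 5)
Your argument is correct and is essentially the paper's own proof: both $\Lambda'$ and $\Lambda_f'$ are pulled back from $\Lambda''$ and $\Lambda_f''$ through the correspondence with middle term $T^*\Bun^{\omega}_{G,N}(X,nx)\times_{\Bun^{\omega}_{G,N}(X,nx)}\Bun_G^{\omega}(X,nx)$ (the paper's $\mu^{-1}(\frn^{\perp})$), and the transversality of Proposition \ref{KostNilp2} then gives a clean intersection of dimension equal to the relative dimension $n\dim N$. The only slip is cosmetic: the smooth map inducing this correspondence is $\Bun_G^{\omega}(X,nx)\to\Bun^{\omega}_{G,N}(X,nx)$ (of which $\Bun_N^{\omega}(X,nx)\to\Bun_N^{\omega}(X)$ is the base change), not $\Bun_N^{\omega}(X,nx)\to\Bun^{\omega}_{G,N}(X,nx)$, though the correspondence you actually work with is the correct one.
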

\begin{proof}
Both $\Lambda'$ and $\Lambda_f'$ live inside
\beq \mu^{-1}(\frn^{\perp}) := T^*\Bun^{\omega}_{G, N}(X, nx) \times_{\Bun^{\omega}_{G, N}(X, nx)} \Bun_G^{\omega}(X, nx) \subset T^*\Bun_G^{\omega}(X, nx)\eeq
and are pulled back from $\Lambda''$ and $\Lambda_f''$ respectively along \beq\label{Project} \xymatrix{\pi : \mu^{-1}(\frn^{\perp}) \ar[r] &  T^*\Bun^{\omega}_{G, N}(X, nx).}\eeq

By the previous Proposition \ref{KostNilp2}, $\Lambda'$ and $\Lambda'_f$ intersect transversely inside $\mu^{-1}(\frn^{\perp})$ and the dimension of intersection is $n \dim N$, the relative dimension of $\pi$.
Therefore they intersect cleanly inside the full $T^*\Bun_G^{\omega}(X, nx)$.
\end{proof}

Therefore by Proposition \ref{shift}, the Whittaker functional
\beq \phi_{f', \rho^{\vee}(\omega)} i'^! \simeq \phi_{F, \rho^{\vee}(\omega)} \eeq
is a shifted microstalk and
\beq
\phi_{f', \rho^{\vee}(\omega)} i'^![\dim_{\rho^{\vee}(\omega)} \Bun_{B^-}(X, nx) - n\dim N]
\eeq
is exact and commutes with Verdier duality. Descending from $\Bun_G^{\omega}(X, nx)$ back to  $\Bun_G(X)$ we get that $\phi_{f, \rho^{\vee}(\omega)}$ is exact after shifting by \beq 2n\dim N - n\dim G + \dim_{\rho^{\vee}(\omega)} \Bun_{B^-}(X, nx) - n\dim N = \dim_{\rho^{\vee}(\omega)} \Bun_{B^-}(X).\eeq Reassuringly, this expression is independent of $n$, the amount of uniformization. We have proved:

\begin{theorem}\label{whitmicrothm}
The shifted Whittaker functional
\beq \label{whitmicrothmmap}
\phi_{f, \rho^{\vee}(\omega)} i^! [\dim_{\rho^{\vee}(\omega)} \Bun_{B^-}(X)]
\eeq
calculates microstalk. In particular \eqref{whitmicrothmmap} is exact and commutes with Verdier duality.
\end{theorem}
The result that the Whittaker functional is exact and commutes with Verdier duality was also obtained in \cite{FR}.
Let's recall where all of the shifts came from in our arguments:
\begin{itemize}
\item The $2n \dim N$ shift appears from \eqref{VanishingShiftDifference} as the difference between vanishing cycles for $f$ on  $\Bun_N^{\omega}(X)$ versus $!$-pullback to $\Bun_N^{\omega}(X, nx)$ followed by vanishing cycles for the lifted function $f'$.
\item The $-n \dim G$ shift came from the fact that $!$-pullback along $\pi: \Bun_G^{\omega}(X, nx) \to \Bun_G(X)$ is not exact but $\pi^![-n\dim G]$ is.
\item The $\dim_{\rho^{\vee}(\omega)} \Bun_{B^-}(X, nx)$ shift is the dimension of $Y^{\leq 0}$ appearing in Proposition~\ref{shift}.
\item The $-n \dim N$ shift is minus the dimension of $\Lambda \cap (T^*_{Y^{> 0}}Y + df)$ appearing in Proposition~\ref{shift}.
\end{itemize}

\subsection{The Whittaker functional in the presence of tame ramification}\label{Tame}
In this section we extend Theorem \ref{whitmicrothm} the case of tame ramification at a finite subset of points $S \subset X$. Let $\Bun_N^{\omega(S)}(X, S)$ be the moduli of $B$-bundles such that the underlying $T$-bundle is $\rho^{\vee}(\omega(S))$ plus a trivialization of the fibers at the marked points $S$.  The Whittaker function is given by summing up \beq \xymatrix{f : \Bun_N^{\omega(S)}(X, S) \ar[r]& \Bun_{\BG_a}^{\omega(S)}(X, S) = \Bun_{\BG_a}^{\omega}(X) \ar[r]& \AA^1} \eeq over simple roots, see Section 2.5 of \cite{NY}.
There is a map \beq \label{RamifiedAttracting} \xymatrix{i: \Bun_N^{\omega(S)}(X, S) \ar[r]& \Bun_{G, N^-}(X, S)}\eeq to the moduli of $G$-bundle with $N^-$-reductions at $S$.

The cotangent space \beq T^*\Bun_{G, N^-}(X, S) = \{E, F_S, \sigma \in H^0(\frg^*_E \otimes \omega(S)) | \res_S(\sigma) \in \frb^*_{F_S} \} \eeq is the moduli of $G$-bundles $E$ with an $N^-$-reduction $F_S$ at $S$ plus a Higgs field $\sigma \in H^0(\frg^*_E \otimes \omega(S))$ whose residue at $S$ is in $\frb^*$ with respect to $F_S$. The Hitchin map \beq \xymatrix{h: T^*\Bun_{G, N^-}(X, S) \ar[r] & \frc^*_{G, N}(X, S)}\eeq sends a Higgs field to its characteristic polynomial plus an ordering of the eigenvalues at the points of $S$. Let $\Lambda = h^{-1}(0)$ be the nilpotent cone in $T^*\Bun_{G, N^-}(X, S)$.

The cotangent space $T^* \Bun_{G, N^-}(X, S)$ is represented by maps $X \rightarrow \frg^*/G \times \BG_m$ such that the underlying $\BG_m$-bundle is $\omega(S)$, plus a lifting at the marked points $S \rightarrow \frb^*/N^-$. We identified $\frb^* = (\frn^-)^{\perp}$ using the Killing form. Below we list the other relevant cotangent spaces together with the pairs of spaces representing them:
\begin{center}
\begin{tabular}{ l l l }
 $T^* \Bun_{G, N^-}(X, S)$ & $\frg^*/G \times \BG_m$ & $\frb^*/N^-$ \\
 $T^* \Bun_{G, N^-}(X, S)|_{\Bun_N^{\omega(S)}(X, S)}$ & $\frg^*/N \rtimes \BG_m$ & $\frb^*$ \\
 $T^*\Bun_N^{\omega(S)}(X)$ & $\frn^*/N \rtimes \BG_m$ & $\pt/N$ \\
 $T^*\Bun_N^{\omega(S)}(X, S)$ & $\frn^*/N \rtimes \BG_m$ & $\frn^*$ \\
  $df + T^*_{\Bun_N^{\omega(S)}(X, S)} \Bun_{G, N^-}(X, S)$ & $(\psi + \frn^{\perp})/N \rtimes \BG_m$ & $\frt^*$ \\
$\frc^*_{G, N}(X, S)$ & $\frc^*/\BG_m$ & $\frt^*$ \\
\end{tabular}
\end{center}

\begin{theorem}
The shifted Whittaker functional \beq \label{RamifiedFunctional} \xymatrix{\phi_{f, \rho^{\vee}(\omega)}i^![\dim_{\rho^{\vee}(\omega)} \Bun_{B^-, N^-}(X, S)]: \Sh_{\Lambda}(\Bun_{G, N^-}(X, S)) \ar[r]& \Vect }\eeq calculates microstalk. In particular \eqref{RamifiedFunctional} is exact and commutes with Verdier duality.
\end{theorem}
\begin{proof}
By looking at cotangent spaces we see that $\Bun_N^{\omega(S)}(X, S)$ is the full attracting locus in $\Bun_{G, N^-}(X, S)$. Moreover the shifted conormal maps isomorphically to the Hitchin base under  \beq \xymatrix{df + T^*_{\Bun_N^{\omega(S)}(X, S)} \Bun_{G, N^-}(X, S) \ar[r] & T^* \Bun_{G, N^-}(X, S) \ar[r] & \frc^*_{G, N}(X, S)}\eeq because the above composition is represented by the map of pairs \beq \xymatrix{((\psi + \frn^{\perp})/(N \rtimes \BG_m), \frt^*) \ar[r]& (\frg^*/(G \times \BG_m), \frb^*/N^-) \ar[r]& (\frc^*/\BG_m, \frt^*).}\eeq Therefore the shifted conormal intersects the global nilpotent cone $\Lambda$ transversely at a single smooth point. The result now follows by uniformizing $\Bun_{G, N^-}(X, S)$ by a scheme and then applying Proposition \ref{shift}.
\end{proof}

\bibliographystyle{alpha}
\bibliography{refs}

\end{document}